\date{}
\let\oldproofname=\proofname
\renewcommand{\proofname}{\rm\bf{\oldproofname}}
\newtheorem{thm}{Theorem}[section]
\newtheorem{cor}[thm]{Corollary}
\newtheorem{lem}[thm]{Lemma}
\newtheorem{prop}[thm]{Proposition}
\theoremstyle{definition}
\newtheorem{rmk}[thm]{Remark}
\newtheorem{defn}{Definition}[section]
\newtheorem{claim}[thm]{Claim}
\def\1{\mathbf{1}}
\newcommand{\Z}{\mathbb{Z}}
\newcommand{\N}{\mathcal{N}}
\newcommand{\bN}{\mathbf{N}}
\newcommand{\M}{\mathcal{M}}
\newcommand{\Q}{\mathcal{Q}}
\newcommand{\K}{\mathcal{K}}
\newcommand{\sM}{\mathscr{M}}
\def\D{\text{D}}
\def\S{\text{S}}
\def\conn{\mathrm{Conn}}
\begin{document}
	\title{Vertex cut of  a graph and  connectivity of its neighbourhood complex}
	
	\author{ Rekha Santhanam\footnote{Department of Mathematics, Indian Institute of Technology Bombay, India. reksan@math.iitb.ac.in}, Samir Shukla\footnote{School of Mathematical and Statistical Sciences, Indian Institute of Technology Mandi, India. samir@iitmandi.ac.in}}
	\maketitle

	\begin{abstract}
		
	We show that if a graph $G$  satisfies certain conditions, then the connectivity of neighbourhood complex $\mathcal{N}(G)$ is strictly less than the vertex connectivity of $G$. As an application, we give a relation between the connectivity of the neighbourhood complex and the vertex connectivity for stiff chordal graphs, and for weakly triangulated graphs satisfying certain properties. Next, we consider graphs with  a vertex $v$ such that  for any $k$-subset $S$ of neighbours of $v$, there exists a vertex $v_S \neq v$  such that $S$ is  subset of neighbours of $v_S$. We prove that for any graph $G$ with a vertex $v$ as above,  $\mathcal{N}(G-\{v\})$  is $(k-1)$-connected implies that $\mathcal{N}(G)$ is $(k-1)$-connected. 	We  use this to show that:(i) neighbourhood complexes of queen and king graphs are simply connected and (ii) if $G$ is a non-complete stiff chordal graph, then  vertex connectivity of $G$ is $n+1$  if and only if $\conn(\N(G)) = n$.

	\end{abstract}

	\noindent {\bf Keywords} : Neighbourhood complex, vertex connectivity, chordal graphs.
	
	\noindent 2020 {\it Mathematics Subject Classification:} 05C15,  57M15, 05E45  
	
	\vspace{.1in}
	

	\section{Introduction}
	
	In 1978, L. Lov{\'a}sz (\cite{l}) introduced the notion of a simplicial complex called the neighbourhood complex $\N(G)$ for a graph $G$.  The {\it neighbourhood complex} $\N(G)$ of a graph $G$ is the simplicial complex, whose simplices are those subsets of vertices of $G$ which have a common neighbour. 
	A topological space $X$ is said to be $k$-{\it connected} if every map from an $m$-dimensional sphere $\S^m \to X$ can be extended to
	a map from the $(m+1)$-dimensional disk $\D^{m+1} \to X$ for $m = 0, 1, \ldots , k.$ The connectivity of $X$, denoted  $\conn(X)$, is the largest integer $k$ such that $X$ is  $k$-connected. In \cite{l}, Lov{\'a}sz relates  the chromatic number of a graph $G$ with the connectivity of the neighbourhood complex  $\N(G)$ (see \Cref{lovasz}) and as an application of this  he proved   the  Kneser conjecture, which gives the  chromatic number of a class of graphs called the Kneser graphs. 
	
	 In this article, we derive graph theoretic conditions which are sufficient to imply $n$-connectedness of its neighbourhood complex for some appropriate $n \in \mathbb{N}$. 
 In this vein, we first show that if $\N(H)$ is simply connected and the vertex $v \in G=H\cup\{v\}$ satisfies  an appropriate condition  (cf. \Cref{thm:simplyconnected}) then, $\N(G)$ is simply connected.  As an application, we show that the neighbourhood complexes of queen and king graphs are simply connected (cf. \Cref{kingqueen}). This then implies that the connectivity of the neighbourhood complex  can be determined by computing  the homology of the complex (see \Cref{rmk:homologyofqueen}).  
 

 We further prove that for a graph $G$, if there exists a vertex $v$ satisfying the property that  for any $k$-subset $S$ of neighbours of $v$, there exists a vertex $v_S \neq v$  such that $S$ is  subset of neighbours of $v_S$, then  $\N(G-\{v\})$  is $(k-1)$-connected implies that $\N(G)$ is $(k-1)$-connected (cf.  \Cref{thm:general}). As a consequence of this, we show that if $G$ is  an $(n+1)$-connected  chordal graph which cannot be   folded (see \Cref{defn_fold}) onto a clique of size $n+2$, then  $\N(G)$ is  $n$-connected (cf. \Cref{thm:chordal}).

We show that if a graph $G$ satisfies a certain property, then the connectivity of $\N(G)$ is strictly less than that of the vertex connectivity of $G$  (cf. Theorems \ref{thm:cutiscomplete} and \ref{thm:vertexconnectivity}). 	
Finally, in  the case  of chordal graphs, we show that the vertex connectivity completely determines the connectivity of the  neighbourhood complex.
We prove that if $G$ is a non-complete stiff  (see \Cref{defn_fold}) chordal graph, then  vertex connectivity of $G$ is $n+1$  if and only if $\conn(\N(G)) = n$ (cf. \Cref{thm:chordalcomplete}). 

In the last section, we show that the inequality between vertex connectivity of the graph and connectivity of the neighbourhood complex varies based  on the class of graphs. Apart from the classes of graphs in Theorems \ref{thm:cutiscomplete} and \ref{thm:vertexconnectivity}),  we give examples of a class of graphs for which vertex connectivity is  higher than the connectivity of its neighbourhood complex. On the other hand, there are classes of graphs for which the inequality is reversed. There are easy examples in the non-stiff case which we describe in \Cref{sec:conclusion}. More interestingly, for any $r \geq 1$ and $p \geq 5$,  we  construct  a stiff  graph $G_{r,p}$ (see \Cref{sec:conclusion}) such that the connectivity of its  neighbourhood complex is $\min\{2r,p-3\}$ but the vertex connectivity is $1$.


	\section{Preliminaries}

We begin by defining our objects of interest. These definitions are standard and are available in \cite{bondy} and \cite{dk}. For completeness, we included them here. 	A  graph $G$ is a  pair $(V(G), E(G))$,  where $V(G)$  is called the set of vertices 
	of $G$  and $E(G) \subseteq \binom{V(G)}{2}$
	denotes the set of unordered  edges.
	If $(x, y) \in E(G)$, it is also denoted by $x \sim y$. 
	A {\it subgraph} $H$ of $G$ is a graph with $V(H) \subseteq V(G)$ and $E(H) \subseteq E(G)$.
	For a subset $S \subseteq V(G)$, the induced subgraph $G[S]$ is the subgraph whose set of vertices  $V(G[S]) = S$
	and the set of edges
	$E(G[S]) = \{(v, w) \in E(G) \ | \ v, w \in S\}$. We denote the graph $G[V(G) \setminus S]$ by $G - S$. The complement graph $\bar{G}$ of  $G$ is the graph on $V(G)$  and $E(\bar{G}) = \{(x, y) | \ (x, y) \notin E(G)\} $.
	
	A {\it graph homomorphism} from  $G$ to $H$ is a function
	$\phi: V(G) \to V(H)$ such that, $(v,w) \in E(G) \implies (\phi(v),\phi(w)) \in E(H).$ A graph homomorphism $f$ is called an {\it isomorphism} if $f$ is bijective and $f^{-1}$ is also a graph homomorphism. Two graphs are called {\it isomorphic},  if there exists an isomorphism between them.
	A {\it clique} of size $n$ or {\it complete graph} on $n$ vertices, denoted by $K_n$,  is a graph on $n$ vertices where any two distinct vertices are adjacent by an edge. If $G$ and $H$ are isomorphic, we write $G \cong H$.
	The {\it chromatic number} $\chi(G)$ of a graph $G$ is defined as
	$\chi(G) := \text{min}\{n \ | \ \exists \ \text{a graph} $ $ \text{ homomorphism from } G \ \text{to} \ K_n\} $.
	
	Let $G$ be a graph and $v$ be a vertex of $G$. The {\it
		neighbourhood  of $v$} is defined as $N_G(v)=\{ w \in V(G) \ |  \
	(v,w) \in E(G)\}$. 
	The {\it degree} of a vertex $v$ is $|N_G(v)|$. For $A \subseteq V(G)$, the neighbour $N_G(A) = \{v \in G \ |  \ v \sim x \ \forall \ x \in A\}$.

	Let $x$ and $y$ be two distinct vertices of $G$. A {\it $xy$-path} $P$ is a sequence $x v_0 \ldots v_n y$ of vertices of $G$ such that $x \sim v_0, v_n \sim y$ and $v_i \sim v_{i+1}$ for all $0 \leq i\leq n-1$. For an $xy$-path $P = x v_0 \ldots v_n y $, the vertex set of $P$ is $V(P) = \{x, v_0, \ldots, v_n, y\}$.  Given an $xy$-path $P = x v_0 \ldots v_n y$, the path $P^{-1} = y v_n \ldots v_0 x$ is an $yx$-path. Given two paths $P= x v_0 \ldots v_m y$ and $Q = y u_0 \ldots u_n z$, we define the $xz$-path $PQ = x v_0 \ldots v_m y u_0 \ldots u_n z$. Two $xy$-paths $P$ and $Q$ are called {\it internally disjoint} if $V(P) \cap V(Q) = \{x, y\}$. A graph $G$ is called {\it $k$-connected} if for any two distinct vertices $x$ and $y$, there exists at least $k$  internally disjoint $xy$-paths. A graph is called connected if it is $1$-connected. The {\it vertex connectivity $\kappa(G)$} is the maximum value of $k$ for which $G$ is $k$-connected.   A {\it vertex  cut} of $G$ is a subset $S \subseteq V(G)$ such that $G-S$ is a disconnected graph. If $X$ is the vertex set of a component of $G-S$, then subgraph $G[S \cup X]$ is called an {\it $S$-component} of $G$. A vertex cut $S$ is called {\it minimal} if $G-S'$ is connected for any  $S'$ such that  $|S'| < |S| $. It is well known  that $\kappa(G) = |S|$, where $S$ is a minimal vertex cut of $G$.

	For $k \geq 3$, a  {\it cycle graph} on $k$ vertices, denoted by $C_k$,  is a graph with $V(C_k) = \{1, \ldots, k\}$ and $E(C_k) = \{(i, i+1) \ | \  1 \leq i \leq k-1\} \cup \{(1, k)\}$. A {\it chordal graph} is a graph having no induced subgraph which is isomorphic to $C_k$ for $k \geq 4$.  It is well known that a chordal graph is a perfect graph, {\it i.e.}, chromatic number of  every  induced subgraph has a clique of that size.

	A {\it finite abstract simplicial complex X} is a collection of
	finite sets such that if $\tau \in X$ and $\sigma \subseteq \tau$,
	then $\sigma \in X$. The elements  of $X$ are called {\it simplices}
	of $X$. The  dimension of a simplex $\sigma$ is equal to $|\sigma| - 1$, here $|\cdot |$ denotes the cardinality. 
	The $0$-dimensional
	simplices are called vertices of $X$ and we denote the set of all vertices of $X$ by $V(X)$. For a subset $S \subseteq V(X)$, the induced subcomplex of $X$ on $S$, denoted $X[S]$, is a simplicial complex  whose simplices are $\sigma \in X$ such that $\sigma \subseteq S$. The $k $th- skeleton of $X$ consists of all simplices of $X$ of dimension at most $k$.

	The \emph{neighbourhood complex} $\N(G)$  of a graph $G$ is the simplicial complex, whose simplices are $\sigma \subseteq V(G)$ such that  $N_G(\sigma) \neq  \emptyset$.  Lov{\'a}sz proved the following statement
		
		\begin{thm} \cite[Lov{\'a}sz]{l} \label{lovasz} For a graph $G$, $\chi(G) \geq \conn(\N(G)) + 3$.
		
	\end{thm}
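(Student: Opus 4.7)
The plan is to deduce the bound from the Borsuk--Ulam theorem via a free $\Z/2$-complex canonically associated to $G$. To $G$ I would associate the \emph{box complex} $B(G)$: a simplicial complex with vertex set $V(G)\sqcup V(G)$ (two disjoint copies) whose simplices are the pairs $A\sqcup B$ of disjoint subsets, one from each copy, forming a complete bipartite subgraph of $G$. Swapping the two copies defines a free simplicial $\Z/2$-action on $B(G)$. The key homotopy-theoretic input is that $B(G)$ is $\Z/2$-homotopy equivalent to the unreduced suspension $\Sigma\N(G)$ carrying the swap involution, so in particular $\conn(B(G))\ge \conn(\N(G))+1$. I would prove this by applying Quillen's fiber lemma to the natural projection at the level of face posets, verifying that the relevant fibers are contractible simplices of common-neighbour vertices and identifying the two ``cone points'' produced by the empty-$A$ and empty-$B$ faces with the two suspension points.

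A proper $n$-coloring of $G$ is a graph homomorphism $c\colon G\to K_n$, which induces a $\Z/2$-equivariant simplicial map $B(c)\colon B(G)\to B(K_n)$. A direct combinatorial calculation identifies $B(K_n)$ with the boundary of the $n$-dimensional cross-polytope on $V(K_n)\sqcup V(K_n)$, hence with $\S^{n-1}$ carrying the antipodal $\Z/2$-action. Now suppose $\N(G)$ is $k$-connected; then $B(G)$ is $(k+1)$-connected, and being a free $\Z/2$-CW complex has $\Z/2$-index at least $k+2$. By Borsuk--Ulam, no $\Z/2$-equivariant map $B(G)\to\S^{k+1}$ exists; composing $B(c)$ with the identification $B(K_n)\simeq\S^{n-1}$ therefore forces $k+1<n-1$, that is, $\chi(G)=n\ge\conn(\N(G))+3$.

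The technical crux is the suspension-type connectivity bound $\conn(B(G))\ge\conn(\N(G))+1$: one must carefully track the additional topological coordinate contributed by the $\Z/2$-action so as not to go off by one. The identification $B(K_n)\simeq\S^{n-1}$ is a direct combinatorial verification, and the closing Borsuk--Ulam step is a standard $\Z/2$-index argument once the equivariant setup is in place.
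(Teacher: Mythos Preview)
The paper does not prove this statement: Theorem~\ref{lovasz} is quoted as Lov\'asz's result from \cite{l} and is used throughout as a black box (for instance in the proofs of Theorem~\ref{thm:vertexconnectivity} and Theorem~\ref{thm:converse}), so there is no in-paper argument to compare your proposal against.

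That said, your outline is a correct and by-now standard route to the theorem, essentially the box-complex reformulation due to Matou\v{s}ek--Ziegler and Csorba (cf.\ \cite{csorba3}, which the paper itself cites). One point to watch: there are several variants of the box complex in the literature, and the precise statement ``$B(G)$ is $\Z/2$-homotopy equivalent to $\Sigma\N(G)$'' holds for the version in which one of $A,B$ may be empty but the common-neighbour condition is still imposed on the nonempty side; for other variants one gets instead a $\Z/2$-deformation retraction onto a complex merely \emph{homotopy equivalent} to $\N(G)$, and the extra suspension coordinate has to be recovered differently. Either way the connectivity inequality $\conn(B(G))\ge\conn(\N(G))+1$ and the Borsuk--Ulam conclusion go through, so your argument is sound once the chosen box complex is pinned down. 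Lov\'asz's original 1978 proof in \cite{l} did not use box complexes explicitly but worked directly with $\N(G)$ and an auxiliary free $\Z/2$-complex built from it; the two approaches are equivalent in spirit.
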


 \begin{defn}  \label{defn_fold}
 Let $G$ be a graph and $N_G(u) \subseteq N_G(v)$  for $u,v \in V(G), u \neq v$.
The graph $G-  \{u\}$ is called a {\it fold} of $G$ and we denote it by  $G \searrow G -\{u\}$.  
The graph $G$ is called {\it stiff}, if there exists no vertex $u \in V(G)$ can be folded. 

\end{defn} 
	
\begin{prop}\cite[Proposition $4.2$ and Proposition $5.1$]{BK} \label{fold} \\
Let $G$ be a graph and $u \in V(G)$. If $G$ is folded onto $G - \{u\}$, then $\N(G)$ is of same homotopy type as $\N(G -\{u\})$.
\end{prop}

In  this article, if no confusion arises, we simply write $n$-connected for both  $n$-vertex connectivity   and  $n$-topological connectivity.  The vertex connectivity is always used in the context of
a graph, and topological connectivity is used in the context of higher dimensional complexes.

	\section{Simply connectedness of neighbourhood complex}\label{sec:main}
	
	In this section, we explore a sufficient condition on a vertex $v$ of a graph $G$ under which  simply connectedness of $\N(G-\{v\})$ implies the simply connectedness of $\N(G)$.  We first recall the following results from \cite{bjorner}, which we use throughout this article.

		\begin{defn} The nerve of a family of sets $(A_i)_{i \in I}$  is the simplicial complex $\mathbf{N} = \mathbf{N}(\{A_i\})$ defined on the vertex set $I$ so that a finite subset $\sigma \subseteq I$ is in $\mathbf{N}$ precisely when $\bigcap\limits_{i \in \sigma} A_i \neq \emptyset$.
	\end{defn}
		\begin{thm}\cite[Theorem 10.6]{bjorner}\label{thm:nerve}
		Let $\Delta$ be a simplicial complex and $(\Delta_i)_{i \in I}$ be a family of subcomplexes such that $\Delta = \bigcup\limits_{i \in I} \Delta_i$.
		\begin{itemize}
			\item[(i)] \label{nerve1}Suppose every non-empty finite intersection $\Delta_{i_1} \cap \ldots \cap \Delta_{i_t}$ for $i_j \in I, t \in \bN$ is contractible, then $\Delta$ and $\mathbf{N}(\{\Delta_i\})$ are homotopy equivalent.
			\item[(ii)] Suppose every  non-empty finite intersection $\Delta_{i_1} \cap \ldots \cap \Delta_{i_t}$ is $(k-t+1)$-connected. Then $\Delta$ is $k$-connected if and only if  $\mathbf{N}(\{\Delta_i\})$ is $k$-connected. 
		\end{itemize}
	\end{thm}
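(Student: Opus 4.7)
The plan is to prove both parts simultaneously by introducing a single auxiliary ``blow-up'' space $B$ equipped with two projections---one onto $|\Delta|$ and one onto $|\mathbf{N}|$---and then controlling the connectivity of each via its fibers.

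First, I would build
\[
B \;=\; \bigl\{(x,t) \in |\Delta| \times |\mathbf{N}| \,:\, x \in |\Delta_\sigma|\text{ where } \sigma = \mathrm{supp}(t)\bigr\},
\]
where $\Delta_\sigma := \bigcap_{i \in \sigma} \Delta_i$ and $\mathrm{supp}(t)$ is the unique open simplex of $|\mathbf{N}|$ containing $t$. Equivalently, $B$ realizes the homotopy colimit of the diagram $\sigma \mapsto |\Delta_\sigma|$ indexed over the simplices of $\mathbf{N}$. Let $\pi_1 : B \to |\Delta|$ and $\pi_2 : B \to |\mathbf{N}|$ be the coordinate projections.

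Next, I would check that $\pi_1$ is a weak equivalence in complete generality. The fiber $\pi_1^{-1}(x)$ is the realization of the nerve of $\{\Delta_i : x \in |\Delta_i|\}$, and since every subset of this subfamily has the common point $x$, that nerve is a full simplex, hence contractible. Filtering $|\Delta|$ by its skeleta and inductively deformation-retracting each preimage of a skeleton onto the skeleton itself then shows $\pi_1$ is a weak equivalence.

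Finally, I would analyze $\pi_2$. Its fiber over an interior point of $\sigma \in \mathbf{N}$ is $|\Delta_\sigma|$. Under hypothesis (i) every such fiber is contractible, so the same skeletal induction as above yields that $\pi_2$ is a weak equivalence; composing with $\pi_1^{-1}$ gives $|\Delta| \simeq |\mathbf{N}|$. Under hypothesis (ii) the fiber over a $(t-1)$-simplex is $(k-t+1)$-connected, and iterating the Mayer--Vietoris / pushout step along the cell attachments of $|\mathbf{N}|$ gives that $\pi_2$ is a $k$-equivalence, producing the biconditional. The main obstacle is the bookkeeping of (ii): the fiber connectivity drops by one as the base simplex dimension rises by one, and one must verify that this drop is exactly compensated by the extra cell dimension at each stage. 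The cleanest formalism is to present $B$ as the geometric realization of a bisimplicial set and invoke the realization (``bisimplicial'') lemma, which packages precisely this cancellation and converts the fiberwise connectivity assumption into the global $k$-equivalence statement.
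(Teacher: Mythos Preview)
The paper does not prove this statement at all: it is quoted verbatim from Bj\"orner's handbook article (as \cite[Theorem 10.6]{bjorner}) and used as a black box throughout. There is therefore no in-paper proof to compare your proposal against.

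That said, your sketch is essentially the standard argument, and in fact the one Bj\"orner himself outlines. One builds the intermediate space $B$ (the homotopy colimit of $\sigma \mapsto |\Delta_\sigma|$, realized as the diagonal of the evident bisimplicial set), and analyzes the two projections. The projection $\pi_1$ to $|\Delta|$ has fibers that are full simplices, hence is always a homotopy equivalence; the projection $\pi_2$ to $|\mathbf N|$ has fiber $|\Delta_\sigma|$ over the open simplex $\sigma$, and the hypotheses on these fibers, processed skeleton by skeleton via the gluing (homotopy pushout) lemma, give the homotopy equivalence in (i) and the $k$-equivalence in (ii). Your identification of the key bookkeeping point in (ii)---that the one-step drop in fiber connectivity is exactly compensated by the one-step rise in cell dimension at each attachment---is correct, and invoking the bisimplicial realization lemma is a clean way to formalize it. One minor caution for a full write-up: to upgrade ``weak equivalence'' to ``homotopy equivalence'' you should either work explicitly with the skeletal CW filtrations or invoke Whitehead's theorem, since both $|\Delta|$ and $|\mathbf N|$ carry CW structures.
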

	
	\begin{lem}\cite[Lemma 10.3(ii)]{bjorner}\label{thm:union} Let $\Delta$ be a simplicial complex and $\Delta_1$, $\Delta_2$ be the sub-complexes of $\Delta$ such that $\Delta = \Delta_1 \cup \Delta_2$. If $\Delta_1$  and $\Delta_2$ are $k$-connected and $\Delta_1 \cap \Delta_2$ is $(k-1)$- connected, then $\Delta$ is $k$-connected.
	\end{lem}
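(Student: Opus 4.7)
My plan is to derive \Cref{thm:union} as an immediate consequence of the nerve theorem, \Cref{thm:nerve}(ii), applied to the two-element cover $\{\Delta_1,\Delta_2\}$ of $\Delta$. The key observation is that the nerve of a cover by two subcomplexes is either a single $1$-simplex (when the pairwise intersection is non-empty) or a disjoint union of two points, so its connectivity is governed entirely by whether $\Delta_1\cap\Delta_2$ is non-empty.

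First I would argue that $\Delta_1\cap\Delta_2$ is non-empty: under the standard convention that $(-1)$-connected means non-empty, the hypothesis that $\Delta_1\cap\Delta_2$ is $(k-1)$-connected already at the weakest case $k=0$ forces the intersection to be non-empty. Hence $\mathbf{N}=\mathbf{N}(\{\Delta_1,\Delta_2\})$ is a single $1$-simplex, which is contractible and therefore $k$-connected for every $k$.

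Next I would verify the connectivity hypothesis of \Cref{thm:nerve}(ii). There are only two cases to check. For $t=1$, the requirement is that each $\Delta_i$ be $k$-connected, which is given. For $t=2$, the requirement reduces to $\Delta_1\cap\Delta_2$ being $(k-1)$-connected, which is also given. All higher intersections are empty (there are only two sets in the cover), so no further conditions arise.

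With the hypotheses verified, \Cref{thm:nerve}(ii) yields that $\Delta$ is $k$-connected if and only if $\mathbf{N}$ is $k$-connected, and since $\mathbf{N}$ is contractible the conclusion follows at once. The only subtlety is handling the boundary cases of the connectivity indexing convention (in particular the $k=0$ case, where $(-1)$-connectedness reduces to non-emptiness of the intersection), but this is a matter of bookkeeping rather than a genuine obstacle. If one preferred an approach independent of the nerve theorem, an alternative would be to fix a simplicial map $f\colon S^m\to\Delta$ for $m\le k$, use simplicial approximation to split the triangulated $S^m$ into subcomplexes $L_1,L_2$ with $f(L_i)\subseteq\Delta_i$, and then use the $(k-1)$-connectivity of $\Delta_1\cap\Delta_2$ to cap off $L_1\cap L_2$ inside $\Delta_1\cap\Delta_2$ before separately extending over $L_1$ and $L_2$ using the $k$-connectivity of each $\Delta_i$; but the nerve route is cleaner and avoids these gluing arguments.
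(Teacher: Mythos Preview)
The paper does not give its own proof of this lemma; it simply cites it as \cite[Lemma 10.3(ii)]{bjorner} and uses it as a black box. So there is no proof in the paper to compare against.

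Your derivation from \Cref{thm:nerve}(ii) with the two-element cover $\{\Delta_1,\Delta_2\}$ is correct and efficient: the connectivity hypotheses for $t=1$ and $t=2$ are exactly the assumptions of the lemma, the intersection is non-empty by the $(k-1)$-connectedness assumption (with the standard convention that $(-1)$-connected means non-empty), and the nerve is then a contractible $1$-simplex, so the equivalence in \Cref{thm:nerve}(ii) gives the conclusion. This is a clean way to see the lemma as a special case of the nerve theorem, and it fits naturally with the toolkit the paper has already imported. In Bj\"orner's original treatment the lemma is established directly (via homotopy-pushout/van Kampen and Mayer--Vietoris type arguments) rather than by invoking the nerve theorem, but your route is perfectly valid and arguably more in keeping with how the present paper uses these tools.
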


Let $X$ be a simplicial complex and let $\eta \in X$.  The {\it link} of $\eta$ is the simplicial complex defined as 
		$$
		lk_X(\eta) := \{\sigma \in X  \ | \ \sigma \cup \eta \in X \ \text{and} \ \sigma \cap \eta = \emptyset\}.
		$$ 
		
		The {\it star} of $\eta$ is defined as 
		$$
		st_X(\eta) := \{\sigma \in X  \ | \ \sigma \cup \eta \in X \}.
		$$

	Observe that  {\it star} of a simplex is always contractible. 
	
	\begin{thm} \label{thm:simplyconnected} Let $G$ be  a connected graph and let there exist a vertex $v$ in G such that for $S=N_G(v)$, the complex   $\N(G-\{v\})[S]$  is path connected. Then $\pi_1(\N(G-\{v\})) = 0$  implies that $\pi_1(\N(G)) = 0$.
	\end{thm}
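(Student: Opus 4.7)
My plan is to write $\N(G) = A \cup B$ with $A = \N(G)[V(H)]$ the induced subcomplex on the vertices of $H := G - \{v\}$, and $B = st_{\N(G)}(v)$ the closed star of $v$, and then to apply \Cref{thm:union} twice. Every simplex of $\N(G)$ either contains $v$ (hence lies in $B$) or lies in $V(H)$ (hence in $A$); the star $B$ is contractible as a cone; and $A \cap B = lk_{\N(G)}(v)$.

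I would first describe $A$ and $lk_{\N(G)}(v)$ explicitly. Any $\sigma \subseteq S$ has $v$ as a common neighbor, while any simplex $\sigma \subseteq V(H)$ whose common neighbor lies in $V(H)$ is already a simplex of $\N(H)$. Thus $A = \N(H) \cup \Delta(S)$, where $\Delta(S)$ denotes the full simplex on $S$, and $\N(H) \cap \Delta(S) = \N(H)[S]$. For the link, $lk_{\N(G)}(v) = \bigcup_{w \in S} \Delta(N_H(w))$; each finite intersection $\Delta(\bigcap_j N_H(w_j))$ is a (possibly empty) simplex, hence contractible when nonempty. So \Cref{thm:nerve}(i) identifies $lk_{\N(G)}(v)$ up to homotopy with the nerve of this cover, which is precisely $\N(H)[S]$ (a subset $\sigma \subseteq S$ indexes a nonempty intersection iff $\sigma$ has a common neighbor in $H$).

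With these identifications, the first application of \Cref{thm:union} is to $A = \N(H) \cup \Delta(S)$: here $\N(H)$ is simply connected by hypothesis, $\Delta(S)$ is contractible, and I need the intersection $\N(H)[S]$ to be path connected. This is where the hypothesis that $\N(H[S])$ is path connected enters: the natural inclusion $\N(H[S]) \hookrightarrow \N(H)[S]$ combined with the path connectedness of $\N(H)$ should allow any vertex of $\N(H)[S]$ to be joined to a vertex of $\N(H[S])$ by a path that stays inside $\N(H)[S]$. Granting this, $A$ is simply connected. A second application of \Cref{thm:union} to $\N(G) = A \cup B$, with $A$ simply connected, $B$ contractible, and $A \cap B \simeq \N(H)[S]$ path connected, concludes $\pi_1(\N(G)) = 0$.

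The main obstacle I anticipate is proving that $\N(H)[S]$ is path connected: $\N(H)[S]$ can contain a vertex $x \in S$ whose only $H$-neighbors lie outside $S$, and such a vertex is not a vertex of $\N(H[S])$; linking it to $\N(H[S])$ inside $\N(H)[S]$ calls on both the given hypothesis and the simple connectedness of $\N(H)$, which together ensure the existence of a witness $y \in V(H) \setminus S$ adjacent to both $x$ and some vertex of $\N(H[S])$. The two applications of \Cref{thm:union} and the nerve-theorem identification are otherwise routine.
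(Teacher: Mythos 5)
Your decomposition is exactly the paper's: the paper likewise writes $\N(G)$ as the union of $st_{\N(G)}(v)$ with the induced subcomplex $X$ of $\N(G)$ on $V(G)\setminus\{v\}$, splits $X$ as $\N(G-\{v\})\cup\Delta$ with $\Delta$ the simplex on $S$, identifies $lk_{\N(G)}(v)$ via \Cref{thm:nerve}(i) with the nerve of the simplices on the sets $N_G(x_i)\setminus\{v\}$ (which is the induced subcomplex $\N(G-\{v\})[S]$, as you say), and then applies \Cref{thm:union} twice in the same order. Structurally the two proofs coincide.

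The one place you diverge is precisely the ``main obstacle'' you flag, and there your proposed fix does not work. You are right that what the argument actually needs is path connectivity of the induced subcomplex $\N(G-\{v\})[S]$, and that this complex is in general strictly larger than $\N((G-\{v\})[S])$, the neighbourhood complex of the induced subgraph: a vertex $x\in S$ all of whose $H$-neighbours lie outside $S$ belongs to the former but not the latter. However, path connectivity of the smaller complex together with simple connectivity of $\N(H)$ does not yield the witness $y$ you invoke: a path in $\N(H)$ joining $x$ to a vertex of $S$ may pass through vertices outside $S$, and nothing in the hypotheses forces a vertex of $V(H)\setminus S$ to be adjacent both to $x$ and to another vertex of $S$. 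So, if the hypothesis is read literally as concerning $\N((G-\{v\})[S])$, this step is a genuine gap and it is not clear the implication even holds. The resolution is that the paper's own proof --- and every application of the theorem in the paper, e.g.\ the queen/king argument, where path connectivity is verified for induced subcomplexes of the form $\N(\Q_{p,q})[B]$ --- takes the hypothesis to be path connectivity of the induced subcomplex $\N(G-\{v\})[S]$ itself; the parenthesization in the displayed statement is a misprint. Under that intended reading the step you worry about disappears and your argument is complete and identical to the paper's.
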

	\begin{proof}  Suppose $\pi_1(\N(G-\{v\})) = 0$.  Let $S=N_G(v) = \{x_1, \ldots, x_r\}$. For each $1 \leq i \leq r$, let
		$\Delta_i$ be a simplex on vertex set $N_G(x_i) \setminus \{v\}$. Observe that $lk_{\N(G)}(v) = \Delta_1\cup \ldots \cup \Delta_r$. Clearly,  any 
		 non-empty finite intersection $\Delta_{i_1} \cap \Delta_{i_2} \cap \ldots \cap \Delta_{i_t}$, $1 \leq t \leq r$ is a simplex and therefore contractible.   Hence, by Theorem \ref{thm:nerve}$(i)$, $lk_{\N(G)}(v)$ and the nerve $\mathbf{N}(\{\Delta_i\})$ are homotopy equivalent. Since  $\N(G-\{v\})[S]$ is path connected, we observe that $\bN(\{\Delta_i\})$ is path connected and therefore $lk_{\N(G)}(v)$ is path connected. 
		
		Let $\Delta$ be a simplex on $S$. Let $X$ be  the induced subcomplex of  $\N(G)$ on $V(G) \setminus \{v\}$.  Clearly, $X= \N(G-\{v\}) \cup \Delta$ and $\N(G-\{v\}) \cap \Delta = \N(G-\{v\})[S]$.  Since $\N(G-\{v\})$ is simply connected and   $\N(G-\{v\})[S]$ is path connected, using,  \Cref{thm:union} we conclude that $X$ is simply connected. Observe that  $\N(G) = X \cup st_{\N(G)}(\{v\})$ and $X \cap st_{\N(G)} (\{v\})= lk_{\N(G)}(\{v\})$. Since $st_{\N(G)} (\{v\})$  is contractible and $lk_{\N(G)}(\{v\})$ is path connected, the result follows from Lemma \ref{thm:union}. 
		
	\end{proof}

		As an application of, \Cref{thm:simplyconnected} we show that the neighbourhood complexes of queen graphs and king graphs are simply connected.
	
	\begin{defn} The $m\times n$ queen graph $\mathcal{Q}_{m,n}$ is a graph with $mn$ vertices in which each vertex represents a square in an $m\times n$ chessboard, and each edge corresponds to a legal move by a queen (see \Cref{fig:queen and king} (a)).
	\end{defn}
	\begin{defn} The $m\times n$ king graph $\mathcal{K}_{m,n}$ is a graph with $mn$ vertices in which each vertex represents a square in an $m\times n$ chessboard, and each edge corresponds to a legal move by a king (see \Cref{fig:queen and king} (b)).
	\end{defn}

	\begin{figure} [H]
			\begin{subfigure}[]{0.45\textwidth}
				\vspace{1.0 cm}
			\centering
			\begin{tikzpicture}
			[scale=0.35, vertices/.style={draw, fill=black, circle, inner sep=0.5pt, minimum size = 0pt, }]
			
			\node[vertices, label=left:{}] (00) at (0,0) {};
			\node[vertices, label=below:{}] (10) at (4,0) {};
			\node[vertices, label=below:{}] (20) at (8,0) {};
			\node[vertices, label=below:{}] (30) at (12,0) {};

			\node[vertices, label=left:{}] (01) at (0,4) {};
			\node[vertices, label=below:{}] (11) at (4,4) {};
			\node[vertices, label=below:{}] (21) at (8,4) {};
			\node[vertices, label=below:{}] (31) at (12,4) {};
			
			\node[vertices, label=left:{}] (02) at (0,8) {};
			\node[vertices, label=below:{}] (12) at (4,8) {};
			\node[vertices, label=below:{}] (22) at (8,8) {};
			\node[vertices, label=right:{}] (32) at (12,8) {};

			\node[vertices, label=below:{$\scriptstyle (1,1)$}] (a1) at (2,2) {};
			\node[vertices, label=below:{$\scriptstyle (2,1)$}] (a2) at (6,2) {};
			\node[vertices, label=below:{$\scriptstyle (3,1)$}] (a3) at (10,2) {};
			
			\node[vertices, label=left:{$\scriptstyle (1,2)$}] (b1) at (2,6) {};
			\node[vertices, label=above:{$\scriptstyle (2,2)$}] (b2) at (6,6) {};
			\node[vertices, label=right:{$\scriptstyle (3,2)$}] (b3) at (10,6) {};

			\foreach \to/\from in
			{a1/a2, a2/a3, a1/b1, a1/b2, a2/b2, a2/b3, a3/b3,  b1/b2,  b2/b3, a2/b1,  a3/b2} \draw [-] (\to)--(\from);
			\foreach \to/\from in
			{00/10, 10/20, 20/30,  00/01, 01/11, 12/32, 11/21, 21/31,  01/02,10/11, 20/21, 30/31, 02/12, 11/12, 21/22, 31/32} \draw [dashed] (\to)--(\from);

			\path
			
			(a1) edge[bend right= 45] (a3) 
			(b1) edge[bend left= 45] (b3);

			\end{tikzpicture}
					\vspace{0.5 cm}
			\caption{$\Q_{3, 2}$} \label{queen}
		\end{subfigure}
		\begin{subfigure}[]{0.5\textwidth}
			\centering
			\begin{tikzpicture}
			[scale=0.35, vertices/.style={draw, fill=black, circle, inner sep=0.5pt, minimum size = 0pt, }]
			
			\node[vertices, label=left:{}] (00) at (0,0) {};
			\node[vertices, label=below:{}] (10) at (4,0) {};
			\node[vertices, label=below:{}] (20) at (8,0) {};
			\node[vertices, label=below:{}] (30) at (12,0) {};
			\node[vertices, label=right:{}] (40) at (16,0) {};

			\node[vertices, label=left:{}] (01) at (0,4) {};
			\node[vertices, label=below:{}] (11) at (4,4) {};
			\node[vertices, label=below:{}] (21) at (8,4) {};
			\node[vertices, label=below:{}] (31) at (12,4) {};
			\node[vertices, label=right:{}] (41) at (16,4) {};
			
			\node[vertices, label=left:{}] (02) at (0,8) {};
			\node[vertices, label=below:{}] (12) at (4,8) {};
			\node[vertices, label=below:{}] (22) at (8,8) {};
			\node[vertices, label=below:{}] (32) at (12,8) {};
			\node[vertices, label=right:{}] (42) at (16,8) {};
			
			\node[vertices, label=left:{}] (03) at (0,12) {};
			\node[vertices, label=below:{}] (13) at (4,12) {};
			\node[vertices, label=below:{}] (23) at (8,12) {};
			\node[vertices, label=below:{}] (33) at (12,12) {};
			\node[vertices, label=right:{}] (43) at (16,12) {};

			\node[vertices, label=below:{$\scriptstyle (1,1)$}] (a1) at (2,2) {};
			\node[vertices, label=below:{$\scriptstyle (2,1)$}] (a2) at (6,2) {};
			\node[vertices, label=below:{$\scriptstyle (3,1)$}] (a3) at (10,2) {};
			\node[vertices, label=below:{$\scriptstyle (4,1)$}] (a4) at (14,2) {};
			
			\node[vertices, label=left:{$\scriptstyle (1,2)$}] (b1) at (2,6) {};
			\node[vertices, label=below:{$\scriptstyle (2,2)$}] (b2) at (6,6) {};
			\node[vertices, label=below:{$\scriptstyle (3,2)$}] (b3) at (10,6) {};
			\node[vertices, label=right:{$\scriptstyle (4,2)$}] (b4) at (14,6) {};

			\node[vertices, label=above:{$\scriptstyle (1,3)$}] (c1) at (2,10) {};
			\node[vertices, label=above:{$\scriptstyle (2,3)$}] (c2) at (6,10) {};
			\node[vertices, label=above:{$\scriptstyle (3,3)$}] (c3) at (10,10) {};
			\node[vertices, label=above:{$\scriptstyle (4,3)$}] (c4) at (14,10) {};

			\foreach \to/\from in
			{a1/a2, a2/a3, a3/a4, a1/b1, a1/b2, a2/b2, a2/b3, a3/b3, a3/b4, a4/b4, b1/b2, b1/c1, b1/c2, b2/b3, b2/c3, b2/c3, b3/c3, b3/b4, b3/c4, b4/c3, b4/c4, c1/c2, c2/c3, c3/c4, a2/b1, b2/c1, a3/b2, b3/c2, a4/b3, b2/c2} \draw [-] (\to)--(\from);
			\foreach \to/\from in
			{00/10, 10/20, 20/30, 30/40, 00/01, 01/11, 12/32, 11/21, 21/31, 31/41, 40/41, 01/02,02/03, 10/11, 20/21, 30/31, 02/12, 32/42, 11/12, 21/22, 31/32, 41/42, 12/13, 22/23, 32/33, 42/43, 03/13, 13/23, 23/33, 33/43} \draw [dashed] (\to)--(\from);

			\end{tikzpicture}
			\caption{$\K_{4, 3}$} \label{king}
		\end{subfigure}
	\caption{Examples of Queen and King graphs}\label{fig:queen and king}
	\end{figure}
	
	\begin{prop} \label{prop:queen}For any $p, q \geq 2, \N(\Q_{p, q})$ has full $1$-skeleton, {\it i.e.},  $\{(i, j), (k,l)\} \in \N(\Q_{p, q})$ for all 
	$(i,j), (k,l) \in V(\Q_{p,q})$. 
\end{prop}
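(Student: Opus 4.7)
The plan is to prove the proposition by producing, for each pair $(i,j)\neq(k,l)$ in $V(\Q_{p,q})$, an explicit third vertex $(m,n)$ that is queen-adjacent to both. Since $p,q\geq 2$, every vertex has at least one neighbour, so any singleton lies in $\N(\Q_{p,q})$; thus only the genuinely $2$-element case $\{(i,j),(k,l)\}$ with $(i,j)\neq(k,l)$ needs work. A queen attacks along rows, columns, and both diagonals, so the natural common-neighbour candidates are the two ``corners'' $(i,l)$ and $(k,j)$ of the axis-aligned rectangle spanned by the two given vertices.

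First I would handle the generic case $i\neq k$ and $j\neq l$. The vertex $(i,l)$ is distinct from both $(i,j)$ (since $l\neq j$) and $(k,l)$ (since $i\neq k$), and it sits in row $i$ with $(i,j)$ and in column $l$ with $(k,l)$. Hence $(i,l)\in N_{\Q_{p,q}}((i,j))\cap N_{\Q_{p,q}}((k,l))$, so $\{(i,j),(k,l)\}\in\N(\Q_{p,q})$.

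Next I would treat the ``same row'' case $i=k$, $j\neq l$ (the ``same column'' case $j=l$, $i\neq k$ is symmetric). If $q\geq 3$, pick any $n\in\{1,\ldots,q\}\setminus\{j,l\}$; then $(i,n)$ lies in row $i$ and is a common neighbour. If $q=2$, then $\{j,l\}=\{1,2\}$ so $|j-l|=1$; since $p\geq 2$, there exists $i'\in\{1,\ldots,p\}$ with $|i'-i|=1$. The vertex $(i',j)$ is then in column $j$ with $(i,j)$ and, because $|i'-i|=1=|j-l|$, lies on a diagonal through $(i,l)$, so it is a queen-neighbour of both.

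The only mild obstacle is the thin-board case $q=2$ (or $p=2$), where a third column (resp.\ row) is unavailable and one must fall back on the diagonal trick using adjacency of rows (resp.\ columns). Combining the three cases covers every pair of distinct vertices, proving that $\{(i,j),(k,l)\}\in\N(\Q_{p,q})$ for all $(i,j),(k,l)\in V(\Q_{p,q})$, i.e.\ $\N(\Q_{p,q})$ has full $1$-skeleton.
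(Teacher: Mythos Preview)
Your proof is correct and follows the same strategy as the paper: exhibit an explicit common queen-neighbour for each pair of squares, splitting into cases according to whether the two squares share a row or column and falling back on a diagonal neighbour when the board is only two squares wide. Your case analysis is in fact slightly more careful than the paper's, which folds the same-column situation ($j=l$, $i\neq k$) into the $i\neq k$ branch and proposes the witness $(i,l)=(i,j)$ there---a minor slip that your separate ``same column'' treatment avoids.
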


\begin{proof}
	Let $(i, j), (k, l) \in V(\Q_{p, q})$.  Without loss of generality we assume that $i \leq k$. If $i \neq  k$, then $\{(i, j), (k,l)\} \subseteq N_{\Q_{p, q}}((i, l))$. So, assume that $i=k$. If $q> 2$, then there exists $t \in \{1, \ldots, q\} \setminus \{j, l\}$ and  $\{(i, j), (k,l)\} \subseteq N_{\Q_{p, q}}((i, t))$. If $q= 2$, then without loss of generality we assume that $j=1$ and $l= 2$. In this case, if 
	$(i-1, 1) \in V(\Q_{p, q})$, then $\{(i, j), (k,l)\} \subseteq N_{\Q_{p, q}}((i-1, 1))$ and if  $(i+1, 1) \in V(\Q_{p, q})$, then $\{(i, j), (k,l)\} \subseteq N_{\Q_{p, q}}((i+1, 1))$.
\end{proof}

	\begin{thm} \label{kingqueen} Let $p, q \geq 2$ be positive integers. Then $\N(\Q_{p, q})$ and $\N(\K_{p, q})$ are simply connected.

		\begin{proof}
			We will first induct on $p$ and then on $q$ to get the general result.  Clearly, $\Q_{2, 2} \cong \K_{2,2} \cong K_4$ and therefore 
			$\N(\Q_{2, 2}) \simeq \N(\K_{2,2,}) \simeq \N(K_4) \simeq S^2$, which is simply connected. Let $p, q $ be positive integers where $\min\{p, q\} \geq 2$ and assume that $\N(\Q_{p, q})$ and $\N(\K_{p, q})$ are simply connected. 
			We first show that $\Q_{p+1, q}$ is simply connected. 

			For $1 \leq i \leq q$, let $G_i$ be the induced subgraph of $\Q_{p+1, q}$ on vertex set  $V(\Q_{p, q}) \cup \{( p+1, 1), \ldots, ( p+1, i)\}$.  From \Cref{prop:queen}, $\N(\Q_{p, q})$ has full $1$-skeleton and therefore  $\N(\Q_{p,q})[N_{G_1}((p+1, 1))]$ is path connected. Since $\N(\Q_{p, q})$ is simply connected, by using \Cref{thm:simplyconnected}, we conclude that 
			$\N(G_1)$ is simply connected. Fix $ 2 \leq i \leq q$ and inductively assume that $\N(G_{i-1})$ is simply connected.  Write  $N_{G_i} ((p+1, i)) = A \sqcup B$, where, $A = \{(p+1, j) | 1 \leq j \leq i-1\}$ and $B = N_{G_i} ((p+1, i)) \cap V(\Q_{p,q})$. Since $\N(\Q_{p,q})[B]$ is path connected,
			$\N(G_{i-1})[B]$ is also  path connected. Clearly,  $\{(p+1, j), (p, i)\} \subseteq N_{G_{i-1}}((p, j))$ for each $1 \leq j \leq i-1$ and therefore using the fact that  $(p, i) \in B$, we conclude that $\N(G_{i-1})[N_{G_i} ((p+1, i))]$ is path connected. Since $\N(G_{i-1})$ is simply connected, $\N(G_i)$ is simply connected from  \Cref{thm:simplyconnected}.  Hence, by induction 
			$\N(\Q_{p+1, q})$ is simply connected. Similarly, if we are given that $\N(\Q_{p,q})$ is simply connected,  then we can show that 
 $\N(\Q_{p , q+1})$ is simply connected. 
			
			We now show that $\N(\K_{p+1, q})$ is simply connected.  For $1 \leq i \leq q$, let $\K_i$ be the induced subgraph of $\K_{p+1, q}$ on vertex set  $V(\K_{p,q}) \cup \{( p+1, 1), \ldots, ( p+1, i)\}$. Since  $N_{\K_{p+1, 1}} ((p+1, 1)) = \{(p, 1), (p, 2)\} \subseteq N_{\K_{p,q}}((p-1, 1))$, $\K_1$ is simply connected by \Cref{thm:simplyconnected}. Fix $ 2 \leq i \leq q$ and inductively assume that $\N(\K_{i-1})$ is simply connected. If $i < q$, then $N_{\K_i}((p+1, i)) = \{(p+1, i-1),(p, i-1), (p, i), (p, i+1)\}$  and if $i = q$, then $N_{\K_i}((p+1, i)) = \{(p+1, i-1),(p, i-1), (p, i)\}$.  Here, since 
			$\{(p+1, i-1), (p, i-1)\} \subseteq N_{\K_{i-1}}((p, i)), \{(p, i-1), (p, i)\} \subseteq N_{\K_{i-1}}((p-1, i)) $ and $\{(p, i), (p, i+1)\} \subseteq N_{\K_{i-1}}((p-1, i))$, we have that $\N(\K_{i-1})[N_{\K_i}((p+1, i))]$ is path connected. Since $\K_{i-1}$ is simply connected, $\K_i$ is simply connected  by \Cref{thm:simplyconnected}. From  induction, $\K_{p+1, q}$ is simply connected. By similar argument, we can show that $\N(K_{p,q})$ is simply connected implies 
			$\N(\K_{p, q+1})$ is simply connected.  Hence, the theorem is proved.
		   
		\end{proof}
		
	\end{thm}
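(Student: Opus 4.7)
The plan is to prove both statements simultaneously by induction on the board dimensions, with \Cref{thm:simplyconnected} as the main engine. The base case $m=n=2$ is immediate: $\Q_{2,2} \cong \K_{2,2} \cong K_4$, so the neighbourhood complex is homotopy equivalent to $\bS^2$ and hence simply connected. For the inductive step, assuming the statement holds for $(p,q)$ with $p,q \geq 2$, I would show it for $(p+1,q)$ (the cases $(p,q+1)$ follow by the analogous argument after transposing rows and columns). The strategy is to grow the graph one square at a time along the new column.

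Concretely, for queens let $G_i$ be the induced subgraph of $\Q_{p+1,q}$ on $V(\Q_{p,q}) \cup \{(p+1,1),\ldots,(p+1,i)\}$, and run a nested induction on $i$. At each step I apply \Cref{thm:simplyconnected} to the vertex $v = (p+1,i)$ of $G_i$: the outer induction gives simple-connectedness of $\N(G_{i-1})$, and I need the neighbourhood condition $\N(G_{i-1})[N_{G_i}(v)]$ be path connected. For queens this is easy using \Cref{prop:queen}: old-column neighbours of $v$ lie in $V(\Q_{p,q})$, where $\N(\Q_{p,q})$ already has full $1$-skeleton, and any new-column neighbour $(p+1,j)$ with $j<i$ is linked to $(p,i) \in N_{G_i}(v)$ through the common neighbour $(p,j)$, so the complex restricted to the neighbourhood is path connected. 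Hence $\N(G_i)$ is simply connected, and in particular $\N(\Q_{p+1,q}) = \N(G_q)$ is simply connected.

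For kings the same scheme works, but the relevant neighbourhoods are smaller: if $\K_i$ denotes the analogous subgraph of $\K_{p+1,q}$, then $N_{\K_i}((p+1,i))$ consists of at most the four specific vertices $(p+1,i-1),(p,i-1),(p,i),(p,i+1)$ (with $(p,i+1)$ absent when $i=q$). To verify that $\N(\K_{i-1})[N_{\K_i}((p+1,i))]$ is path connected I would exhibit explicit common king-neighbours: for instance $(p,i)$ itself is a common neighbour of $(p+1,i-1)$ and $(p,i-1)$, while $(p-1,i)$ is a common neighbour of $(p,i-1),(p,i)$ and, when present, $(p,i+1)$, so these four vertices all lie in a common connected subcomplex of $\N(\K_{i-1})$. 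Then \Cref{thm:simplyconnected} gives that $\N(\K_i)$ is simply connected, and induction on $i$ finishes the king case.

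The main obstacle is the bookkeeping of the double induction together with the explicit identification of common neighbours needed to establish the path-connectedness hypothesis of \Cref{thm:simplyconnected}. The queen step is conceptually short because of full $1$-skeleton, while the king step requires a small case split on whether $i<q$ or $i=q$ (since the neighbour $(p,i+1)$ disappears at the top of the column); no deeper difficulty appears beyond this careful listing.
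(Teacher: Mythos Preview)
Your proposal is correct and follows essentially the same approach as the paper: the same double induction (outer on board size, inner on the vertices of the new column), the same use of \Cref{thm:simplyconnected} at each step, the same appeal to \Cref{prop:queen} for the queen case, and the same explicit common neighbours $(p,i)$ and $(p-1,i)$ for the king case. The only cosmetic difference is that the paper treats the inner base case $i=1$ for kings separately (where the neighbour $(p+1,i-1)$ does not exist), which you should also make explicit when writing out the details.
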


\begin{rmk} \label{rmk:homologyofqueen}
In \Cref{table:1}, we have computed the homology of neighbourhood complexes of queen graphs for some values of $m$ and $n$ by computer (using SAGE). Since $Q_{m, n} \cong Q_{n, m}$ and  neighbourhood complexes of queen graphs are simply connected, we conclude that $\conn(\N(Q_{m, n})) = 2$ for $2 \leq m \leq 4, 5\leq n \leq 6$. 
\end{rmk}

\begin{table}[H]
	\centering
\scalebox{0.58}{
	\begin{tabular}{|c|c|c|c|c|c|c|c|c|c|c|c|c|c|c|c|c|c|c|c|c|c|}
		\hline 
	\backslashbox[20mm]{k}{(m,n)}&$(2,2)$& $(2,3)$& $(2,4)$& $(2,5)$& $(2,6)$& $(2,7)$& $(2,8)$&$(2,9)$& $(2,10)$&$(3,3)$ & $(3,4)$& $(3,5)$& $(3,6)$ & $(3,7)$&$(3,8)$& $(4,2)$& $(4,4)$& $(4,5)$& $(4,6)$\\ [10pt]
	\hline
	\hline
	$0$&$0$& $0$& $0$& $0$& $0$& $0$& $0$&$0$& $0$&$0$ & $0$& $0$& $0$ & $0$&$0$& $0$&  $0$& $0$& $0$\\ [5pt]
	\hline
		$1$&$0$& $0$& $0$& $0$& $0$& $0$& $0$&$0$& $0$&$0$ & $0$& $0$& $0$ & $0$&$0$& $0$&  $0$& $0$& $0$\\ [5pt]
		\hline
			$2$&$\Z$& $\Z$& $\Z$& $0$& $0$& $0$& $0$&$0$& $0$ &$0$ & $0$& $0$& $0$ & $0$&$0$& $\Z$&  $0$& $0$& $0$\\ [5pt]
		\hline
			$3$&$0$& $0$& $0$& $\Z^3$& $\Z$& $\Z$& $\Z$&$\Z$& $\Z$ &$\Z^3$ & $\Z^5$& $\Z^{11}$& $\Z^{8}$ & $\Z^5$&$\Z^3$& $0$&  $\Z^5$& $\Z^9$& $\Z^4$\\ [5pt]
		\hline
\end{tabular}}
\caption{\small{Reduced Homology  $\tilde{H}_k(\N(Q_{m,n});\Z)$}}
\label{table:2}
\end{table}


	\section{Vertex cut and connectivity of neighbourhood complex}

	In this section, we consider graphs $G$ which can be written as a union of two subgraphs $G_1$, $G_2$ such that the connectivity of their neighbourhood complexes are known.  We then  give conditions on the the subgraph induced by their intersection  to give an estimate  for  the  connectivity of the neighbourhood complex of $G$.

	 We first recall the following results from \cite{bjorner}.
	 
		\begin{lem}\cite[Lemma 10.3(iii)]{bjorner}\label{thm:intersection}Let $\Delta_1$ and $\Delta_2$ be two simplicial complexes. If $\Delta_1 \cap \Delta_2$ and $\Delta_1 \cup \Delta_2$ are $k$-connected, then so are $\Delta_1$ and $\Delta_2$.
	\end{lem}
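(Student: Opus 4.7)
The plan is to combine the Mayer--Vietoris long exact sequence in reduced homology with van Kampen's theorem and the Hurewicz theorem. The key observation is that the hypotheses annihilate both ``outer'' terms in Mayer--Vietoris in low degrees, forcing the middle term $\tilde{H}_n(\Delta_1)\oplus \tilde{H}_n(\Delta_2)$ to vanish as well. Since simplicial complexes give CW pairs and the cover $\{\Delta_1,\Delta_2\}$ is excisive after a small thickening (a regular neighbourhood of $\Delta_i$ inside $\Delta_1\cup\Delta_2$ deformation retracts onto $\Delta_i$), Mayer--Vietoris and van Kampen apply directly.

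First I would dispose of the $k=0$ case. Because $\Delta_1\cap\Delta_2$ is non-empty and path-connected, it lies entirely in one path-component of $\Delta_1$; any other path-component of $\Delta_1$ would be disjoint from $\Delta_2$ as well, and hence would remain a separate component of $\Delta_1\cup\Delta_2$, contradicting connectivity of the union. Hence $\Delta_1$ is path-connected, and symmetrically so is $\Delta_2$. For the $k\geq 1$ case, van Kampen yields
$$\pi_1(\Delta_1\cup\Delta_2)\cong \pi_1(\Delta_1)*\pi_1(\Delta_2)$$
since $\Delta_1\cap\Delta_2$ is simply connected. A free product is trivial only when both factors are, so $\pi_1(\Delta_i)=0$.

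With each $\Delta_i$ now simply connected, for $k\geq 2$ I would feed the vanishing of $\tilde{H}_n(\Delta_1\cap\Delta_2)$ and $\tilde{H}_n(\Delta_1\cup\Delta_2)$ for $n\leq k$ (which follows from $k$-connectedness by Hurewicz) into the Mayer--Vietoris sequence
$$\cdots\to \tilde{H}_n(\Delta_1\cap\Delta_2)\to \tilde{H}_n(\Delta_1)\oplus \tilde{H}_n(\Delta_2)\to \tilde{H}_n(\Delta_1\cup\Delta_2)\to \tilde{H}_{n-1}(\Delta_1\cap\Delta_2)\to\cdots$$
to conclude $\tilde{H}_n(\Delta_i)=0$ for $n\leq k$ and $i=1,2$. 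The Hurewicz theorem then upgrades this vanishing, together with simple connectivity, to $k$-connectedness of each $\Delta_i$. The main obstacle is conceptual rather than computational: one must ensure Mayer--Vietoris and van Kampen apply honestly (the excisive-cover issue, resolved by thickening), and dispatch $\pi_0$ and $\pi_1$ by hand so that Hurewicz is available from dimension two onwards.
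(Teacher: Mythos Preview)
The paper does not prove this lemma; it is quoted verbatim from Bj\"orner's survey with a citation and used as a black box. So there is no in-paper proof to compare against.

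Your argument is correct and is in fact the standard proof of this fact: handle $\pi_0$ by hand, handle $\pi_1$ via van Kampen (using that $\Delta_1\cap\Delta_2$ is simply connected when $k\ge 1$, so the amalgamated product degenerates to a free product, which is trivial only when both factors are), and then for $2\le n\le k$ squeeze $\tilde H_n(\Delta_1)\oplus\tilde H_n(\Delta_2)$ between the two zero terms $\tilde H_n(\Delta_1\cap\Delta_2)$ and $\tilde H_n(\Delta_1\cup\Delta_2)$ in Mayer--Vietoris, finishing with Hurewicz. Your remark about needing an excisive cover is well taken; for simplicial complexes one can instead pass to geometric realizations and use open regular neighbourhoods, or simply invoke the simplicial version of Mayer--Vietoris, which applies directly to a decomposition by subcomplexes.
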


	\begin{lem}\cite[Lemma $10.4(ii)$]{bjorner}\label{lem:suspension}
		Let $\Delta_1$ and $\Delta_2$ be two contractible subcomplexes of a simplicial complex $\Delta$ such that $\Delta=\Delta_1 \cup \Delta_2$. Then $\Delta \simeq \ \Sigma(\Delta_1 \cap \Delta_2)$, where $\Sigma(X)$ denotes the suspension of space $X$.
	\end{lem}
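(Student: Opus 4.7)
The plan is to recognise this statement as an instance of the standard fact that a homotopy pushout of two points over a space $X$ is the (unreduced) suspension $\Sigma X$. The key observation is that, since $\Delta_1$ and $\Delta_2$ are subcomplexes of the simplicial complex $\Delta$, each of the inclusions $\iota_i : \Delta_1 \cap \Delta_2 \hookrightarrow \Delta_i$ is a closed inclusion of CW-subcomplexes, hence a cofibration. Therefore, the strict pushout square
\[
\begin{CD}
\Delta_1 \cap \Delta_2 @>\iota_2>> \Delta_2 \\
@V\iota_1VV @VVV \\
\Delta_1 @>>> \Delta
\end{CD}
\]
is also a homotopy pushout, so $\Delta$ is weakly equivalent (in fact homotopy equivalent, as everything is a CW-complex) to the homotopy pushout of the diagram $\Delta_1 \leftarrow \Delta_1 \cap \Delta_2 \rightarrow \Delta_2$.

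Next, I would invoke the gluing lemma (equivalently, the homotopy invariance of homotopy pushouts under object-wise weak equivalences on the three corners). Because $\Delta_1$ and $\Delta_2$ are contractible, the unique maps $\Delta_i \to *$ are homotopy equivalences, while we keep the identity on $\Delta_1 \cap \Delta_2$. Applying the gluing lemma to the map of diagrams
\[
(\Delta_1 \leftarrow \Delta_1 \cap \Delta_2 \rightarrow \Delta_2) \longrightarrow (* \leftarrow \Delta_1 \cap \Delta_2 \rightarrow *),
\]
we conclude that the associated homotopy pushouts are homotopy equivalent. The homotopy pushout on the right is, by definition of the unreduced suspension, exactly $\Sigma(\Delta_1 \cap \Delta_2)$.

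The main subtlety is to make sure that one is comparing homotopy pushouts and not strict pushouts after the replacement, because the maps from $\Delta_1\cap\Delta_2$ to a point are typically not cofibrations. This is handled cleanly by always working with the double mapping cylinder model: the (strict) double mapping cylinder $\Delta_1 \cup_{\iota_1} (\Delta_1\cap\Delta_2) \times [0,1] \cup_{\iota_2} \Delta_2$ is a model for the homotopy pushout and is homotopy equivalent to $\Delta$ (via collapsing the cylinder), while choosing contractions $H_i : \Delta_i \times [0,1] \to \Delta_i$ of $\Delta_i$ to points $p_i$ yields an explicit homotopy equivalence of this double mapping cylinder with the double cone $C(\Delta_1 \cap \Delta_2) \cup_{\Delta_1\cap\Delta_2} C(\Delta_1\cap\Delta_2) = \Sigma(\Delta_1\cap\Delta_2)$. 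This concrete construction avoids invoking model-categorical machinery and is the only place where care with the cofibration conditions is needed.
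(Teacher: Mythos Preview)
The paper does not supply a proof of this lemma; it is simply quoted from Bj\"orner \cite[Lemma 10.4(ii)]{bjorner}. Your argument is correct and is in fact the standard one: since inclusions of subcomplexes are cofibrations, the square $\Delta_1 \leftarrow \Delta_1\cap\Delta_2 \rightarrow \Delta_2$ has $\Delta$ as its homotopy pushout, and the gluing lemma (or the explicit double mapping cylinder comparison you describe) then identifies this with the homotopy pushout of $\ast \leftarrow \Delta_1\cap\Delta_2 \rightarrow \ast$, i.e.\ with $\Sigma(\Delta_1\cap\Delta_2)$. There is nothing to compare against in the present paper, but your write-up is a faithful rendering of the usual proof and would serve as a self-contained justification if one were desired.
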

	
	For a positive integer $n$, let $[n]$ denote the set $\{1, \ldots, n\}$. We first consider the case when a graph can be written as a union of two subgraphs whose intersection is a complete graph which is connected to a vertex in each subgraph. 
	\begin{thm} \label{thm:cutiscomplete} Let $G = G_1 \cup G_2$ such that $G_1 \cap G_2 =  H \cong K_{n}$. Let there exist  $a \in V(G_1), b \in V(G_2)$ such that $a$ and $b$ are adjacent to each vertex of $H$. If $\N(G_1)$ and $\N(G_2)$ are $(n-1)$-connected, then $\conn(\N(G)) = n-1$.
	\end{thm}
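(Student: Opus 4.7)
The plan is to construct a decomposition $\N(G)=Z\cup W$ with both $Z$ and $W$ being $(n-1)$-connected and $Z\cap W\simeq\bS^{n-1}$. Applied through \Cref{thm:union}, this gives $(n-1)$-connectedness of $\N(G)$, and a direct Mayer--Vietoris computation then forces $\tilde H_n(\N(G))\neq 0$, ruling out $n$-connectedness and yielding $\conn(\N(G))=n-1$.

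I would introduce the auxiliary subcomplex $M\subseteq\N(G)$ obtained as the union, over $h\in V(H)$, of the full simplex on vertex set $N_G(h)$. A simplex $\sigma\in\N(G)$ has a common neighbour $v$; the cases $v\in V(G_1)\setminus V(H)$, $v\in V(G_2)\setminus V(H)$, $v\in V(H)$ place $\sigma$ in $\N(G_1)$, $\N(G_2)$, or $M$ respectively, using $N_G(v)\subseteq V(G_i)$ when $v\in V(G_i)\setminus V(H)$. Thus $\N(G)=\N(G_1)\cup\N(G_2)\cup M$, and I set $Z:=\N(G_1)\cup M$, $W:=\N(G_2)\cup M$. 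Several nerve-type computations are then needed. Since $a$ is adjacent to every vertex of $H$, every intersection $\bigcap_{h\in S}N_G(h)$ for $\emptyset\neq S\subseteq V(H)$ contains $a$, and each $N_G(h)$ is a simplex, so \Cref{thm:nerve}(i) identifies $M$ with the full simplex on $V(H)$ and $M$ is contractible; the same argument inside $G_i$ shows $M_i:=\bigcup_{h\in V(H)}N_{G_i}(h)$ is contractible. Because $V(H)$ is a clique in $G_i$, $N_G(h)\cap V(G_i)=N_{G_i}(h)$, which gives $\N(G_i)\cap M=M_i$. Finally, every nonempty $\sigma\subseteq V(H)$ has common neighbours in both $V(G_1)$ and $V(G_2)$ (take $a$, $b$, or an unused vertex of $V(H)$), so $\N(G_1)\cap\N(G_2)$ is the full simplex $\Delta_{V(H)}$ on $V(H)$, and $\Delta_{V(H)}\cap M=\partial\Delta_{V(H)}\simeq\bS^{n-2}$.

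With these ingredients in hand, \Cref{thm:union} applied to $Z=\N(G_1)\cup M$ shows $Z$ is $(n-1)$-connected (from $\N(G_1)$ being $(n-1)$-connected, $M$ contractible, and $\N(G_1)\cap M=M_1$ contractible), and similarly for $W$. \Cref{lem:suspension} applied to $Z\cap W=\Delta_{V(H)}\cup M$ (a union of two contractibles meeting in $\partial\Delta_{V(H)}\simeq\bS^{n-2}$) gives $Z\cap W\simeq\Sigma\bS^{n-2}\simeq\bS^{n-1}$. A second application of \Cref{thm:union} to $\N(G)=Z\cup W$ then shows $\N(G)$ is $(n-1)$-connected. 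For the matching lower bound on connectivity, the Mayer--Vietoris segment
$$
\tilde H_n(\N(G))\longrightarrow \tilde H_{n-1}(Z\cap W)\longrightarrow \tilde H_{n-1}(Z)\oplus\tilde H_{n-1}(W)
$$
reads $\tilde H_n(\N(G))\to\Z\to 0$ by $(n-1)$-connectedness of $Z$ and $W$, so the connecting map is surjective and $\tilde H_n(\N(G))\neq 0$; thus $\N(G)$ is not $n$-connected, and $\conn(\N(G))=n-1$.

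The main obstacle I foresee is the set-theoretic bookkeeping underlying the nerve computations, especially the identity $\N(G_1)\cap M=M_1$: it reduces to $N_G(h)\cap V(G_1)=N_{G_1}(h)$ for every $h\in V(H)$, which in turn follows from $V(H)$ being a clique in $G_1$ together with the fact that no vertex of $V(G_2)\setminus V(H)$ is adjacent in $G$ to a vertex of $V(G_1)\setminus V(H)$. The degenerate case $n=1$ is safe under the convention $\Sigma\bS^{-1}\simeq\bS^{0}$; alternatively one sees directly that $Z\cap W$ becomes $\{h\}\sqcup M\simeq\bS^0$ and the rest of the argument goes through unchanged.
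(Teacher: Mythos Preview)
Your argument is correct and close in spirit to the paper's, but the packaging is different enough to be worth a comment. Both proofs use the same building blocks: the auxiliary complex $M=\bigcup_{h\in V(H)}\Delta_{N_G(h)}$ (the paper calls the pieces $\Delta_1,\ldots,\Delta_n$), the covering $\N(G)=\N(G_1)\cup\N(G_2)\cup M$, and a Mayer--Vietoris computation exhibiting an $\bS^{n-1}$ in the intersection. The paper, however, obtains the lower bound $\conn(\N(G))\geq n-1$ via \Cref{thm:nerve}(ii) applied to the $(n+2)$-piece cover $\{\Delta_1,\ldots,\Delta_n,\N(G_1),\N(G_2)\}$ (whose nerve is $\partial\Delta^{n+1}\simeq\bS^n$), and then proves the upper bound by contradiction, decomposing asymmetrically as $\N(G)=\N(G_1)\cup X$ with $X=\N(G_2)\cup M$ and showing $\N(G_1)\cap X\simeq\bS^{n-1}$ via another nerve computation. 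Your route is more symmetric and lighter on machinery: you avoid \Cref{thm:nerve}(ii) entirely, getting the lower bound from two applications of \Cref{thm:union} and identifying $Z\cap W\simeq\bS^{n-1}$ through \Cref{lem:suspension} rather than a nerve; the upper bound then drops out directly from the same Mayer--Vietoris sequence without a contradiction step. The trade-off is that the paper's nerve computation with $n+2$ pieces gives a little more structural information (the full nerve $\simeq\bS^n$), while your argument is self-contained with only the elementary gluing lemmas.
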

	
	\begin{proof}
		
		Let $V(H) = \{x_1, \ldots, x_n\}$. For each $1 \leq i \leq n$, let $\Delta_i$ be a simplex on the vertex set  $N_G(x_i)$. Let $\Delta_{n+1} = \N(G_1)$ and $\Delta_{n+2} = \N(G_2)$.  Then $\N(G) = \Delta_1 \cup \ldots \cup \Delta_{n+2}$. We compute the nerve $\mathbf{N}(\{\Delta_{i, 1 \leq i \leq n+2}\})$. First we show that, for any subset, $\{i_1, \ldots, i_{n+1}\} \subset [n+2]$ the  intersection $\Delta_{i_1} \cap \ldots \cap \Delta_{i_{n+1}}$ is  non-empty. Let $j = [n+2] \setminus \{i_1, \ldots, i_{n+1}\}$.  If $j = n+1 $, then $\{b\} \in \Delta_{i_1} \cap \ldots \cap \Delta_{i_{n+1}} $ and if $j= n+2$, then $\{a\} \in \Delta_{i_1} \cap \ldots \cap \Delta_{i_{n+1}} $. If $j \in [n]$, then $\{x_j\} \in \Delta_{i_1} \cap \ldots \cap \Delta_{i_{n+1}}  $. Since $\bigcap\limits_{i=1}^{n+2} \Delta_i = \emptyset$, we conclude that  $\mathbf{N}(\{\Delta_{i, 1\leq i \leq n+2}\})$ homotopy equivalent to the simplicial boundary of an $(n+1)$-dimensional simplex, which is of the same homotopy type as $S^{n}$.

		Observe that  $\Delta_{n+1} \cap \Delta_{n+2}$ is a simplex on vertex set  $\{x_1, \ldots, x_n\}$. Further, $\Delta_{n+1} \cap \Delta_{j_1} \cap  \ldots \cap \Delta_{j_k}$,  $\Delta_{n+2} \cap \Delta_{j_1} \cap  \ldots \cap \Delta_{j_k}$ and  $ \Delta_{j_1} \cap  \ldots \cap \Delta_{j_k}$ are all simplices of $\N(G)$ for all $\{j_1, \ldots, j_k\} \subset [n]$. Therefore, each non-empty intersection  $\Delta_{i_1} \cap \ldots \cap \Delta_{i_t}$ is a simplex of $\N(G)$ for $2 \leq t \leq n+2$. Since $\N(G_1)$ and $\N(G_2)$ are $(n-1)$-connected, by taking $k = n-1$ in Theorem \ref{thm:nerve}$(ii)$ we conclude that $\conn(\N(G)) \geq n-1$.
		
		Suppose $\N(G)$ is $n$-connected. 	Let $X = \N(G_2) \cup \bigcup\limits_{i=1}^{n} \Delta_i$. 
		
		\begin{claim} \label{claim:X1X2} $\N(G_1) \cap X \simeq S^{n-1}$.
		\end{claim}

		\begin{proof}[Proof of Claim \ref{claim:X1X2}]
			
			For $1 \leq i \leq n$, let $\Gamma_i = \Delta_i \cap \N(G_1)$ and let $\Gamma_{n+1}$ be a simplex on vertex set  $\{x_1, \ldots, x_n\}$. Then $\N(G_1) \cap X = \bigcup\limits_{i=1}^{n+1}\Gamma_i$.
			Since each $\Gamma_i$ is a simplex, we see that each non-empty intersection $\Gamma_{i_1} \cap \ldots \cap \Gamma_{i_t}$ is a simplex and therefore contractible. Thus, from  Theorem \ref{thm:nerve}$(i)$, $\N(G_1) \cap X \simeq \mathbf{N}(\{\Gamma_{i, 1 \leq i \leq n+1}\})$. Observe that $\bigcap\limits_{i=1}^{n+1} \Gamma_i = \emptyset$. Further, for any $1 \leq t \leq n$,  $\{a\} \in \Gamma_{i_1} \cap \ldots \cap \Gamma_{i_t}$ if $n+1 \notin \{i_1, \ldots, i_t\}$ and $\{x_j\} \in  \Gamma_{i_1} \cap \ldots \cap \Gamma_{i_t}$ if $n+1 \in \{i_1, \ldots i_t\}$, where $n+1 \neq j \in [n] \setminus \{i_1, i_2 \ldots, i_t\}$. Therefore, $\mathbf{N}(\{\Gamma_{i, 1 \leq i \leq n+1}\}) \simeq S^{n-1}$.
		\end{proof}

		Clearly $\N(G) = \N(G_1) \cup X$. 	By Mayer-Vietoris sequence for homology,  
		\begin{center}
			$ \cdots \longrightarrow \tilde{H}_n(\N(G))\longrightarrow \tilde{H}_{n-1}(\N(G_1) \cap X) \longrightarrow \tilde{H}_{n-1}(\N(G_1)) \oplus \tilde{H}_{n-1}(X) \longrightarrow \tilde{H}_{n-1}(\N(G)) \longrightarrow \cdots$.
		\end{center}

		Since $\N(G)$ is $n$-connected and $\N(G_1)$ is $(n-1)$-connected, we have  $\tilde{H}_{n}(\N(G)) = 0,$  $\tilde{H}_{n-1}(\N(G)) = 0$ and $\tilde{H}_{n-1}(\N(G_1)) = 0$. So, $\tilde{H}_{n-1}(X) \cong \tilde{H}_{n-1} (\N(G_1) \cap X) $ and therefore, Claim \ref{claim:X1X2} implies that $\tilde{H}_{n-1}(X) \cong \mathbb{Z}$.

			Let $\Delta = \bigcup\limits_{i=1}^n \Delta_i$. Then $X = \N(G_2) \cup \Delta$. Since $a, b \in \bigcap\limits_{i=1}^n \Delta_i$, by using Theorem \ref{thm:nerve}$(i)$, we conclude that $\Delta$ is contractible. We now show that $\N(G_2)\cap \Delta$ is contractible. For each $1 \leq i \leq n$, let $T_i = \Delta_i \cap \N(G_2)$. Then $\N(G_2)\cap \Delta = T_1 \cup \ldots \cup T_n$. Since each $T_i$ is a simplex and $b \in \bigcap\limits_{i=1}^{n} T_i$, we see that  $\mathbf{N}(\{T_{i, 1 \leq i \leq n}\}) \simeq \N(G_2)\cap \Delta$ and $\mathbf{N}(\{T_{i, 1 \leq i \leq n}\}) $ is an $(n-1)$-dimensional simplex. Therefore, $\N(G_2)\cap \Delta$ is contractible. By Mayer-Vietoris sequence for homology, we have 
			\begin{center}
				$ \cdots \longrightarrow \tilde{H}_{n-1}(\N(G_2))  \oplus \tilde{H}_{n-1}(\Delta) \longrightarrow \tilde{H}_{n-1}(X) \longrightarrow \tilde{H}_{n-2}(\N(G_2) \cap \Delta) \longrightarrow \cdots$.
			\end{center}
			
			Since $\N(G_2)$ is $(n-1)$-connected, $\tilde{H}_{n-1}(\N(G_2)) = 0$. Further, since $\Delta $ and $\N(G_2) \cap \Delta$ is contractible, we conclude that $\tilde{H}_{n-1}(X) = 0$, which is a contradiction.
	\end{proof}

	We now prove that even if the intersection of the two subgraphs of $G$ is not a complete graph, we get a result about the cardinality of the intersection in terms of the connectivity of $\N(G)$.
	\begin{thm}\label{thm:vertexconnectivity} Let $G = G_1 \cup G_2$ and $V(G_1) \cap V(G_2) = S$. Let there exist $a \in V(G_1), b \in V(G_2)$ such that $a \sim x, b \sim x$ for all $x \in S$. Let $k = \min\{\conn(\N(G_1)), \conn( \N(G_2))\}$. 
	\begin{itemize}
	    \item[($i$)]  	If $k \geq |S|$, then  $	|S| \geq 	\conn(\N(G)) +1.$
	    \item[($ii$)] $\conn(\N(G)) \leq k$. In particular, if $k \leq \conn(\N(G[S]))$, then $|S| \geq 	\conn(\N(G)) +3.$
	\end{itemize}
	\end{thm}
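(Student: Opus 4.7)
I will set up a cover of $\N(G)$ whose nerve turns out to be the double suspension of $\N(G[S])$, and then read off both conclusions using the nerve theorem (\Cref{thm:nerve}) and Lov{\'a}sz's inequality (\Cref{lovasz}) applied to the small graph $G[S]$. Write $|S|=s$ and $S=\{x_1,\ldots,x_s\}$; for each $i\in[s]$ let $\Delta_i$ be the simplex on $N_G(x_i)$, and set $\Delta_{s+1}=\N(G_1)$, $\Delta_{s+2}=\N(G_2)$. A case analysis on where a given common neighbour of a simplex sits (in $V(G_j)\setminus S$, or in $S$) shows $\N(G)=\bigcup_{i=1}^{s+2}\Delta_i$, and the hypothesis that $a$ and $b$ are common neighbours of every subset of $S$ forces every non-empty finite intersection in this cover to be a simplex of $\N(G)$, hence contractible: $\bigcap_{i\in J'}\Delta_i$ for $J'\subseteq[s]$ contains $\{a,b\}$, adding $\Delta_{s+1}$ retains $a$ while adding $\Delta_{s+2}$ retains $b$, and an intersection containing both $\Delta_{s+1}$ and $\Delta_{s+2}$ is the simplex on $\bigcap_{i\in J'}N_G(x_i)\cap S$, which is non-empty exactly when $J'\in\N(G[S])$.

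By \Cref{thm:nerve}$(ii)$, for every $j\le k$ the complex $\N(G)$ is $j$-connected if and only if the nerve $\bN$ of this cover is $j$-connected. The previous step identifies $\bN$ as $A\cup B\cup \bN'$, where $A$ and $B$ are the two $s$-simplices on $[s+1]$ and $[s]\cup\{s+2\}$ sharing the $(s-1)$-face $[s]$, and $\bN'=\{J_1\cup J_2 : J_1\subseteq\{s+1,s+2\},\,J_2\in\N(G[S])\}$. Both $A\cup B$ (two simplices glued along a common face) and $\bN'$ (a cone with apex $s+1$, since every simplex of $\bN'$ extends by adjoining $s+1$) are contractible, while their intersection is the union of the two cones on $\N(G[S])$ with apices $s+1$ and $s+2$, i.e.\ the suspension $\Sigma\N(G[S])$. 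Applying \Cref{lem:suspension} now yields $\bN\simeq\Sigma^2\N(G[S])$, so $\conn(\bN)=\conn(\N(G[S]))+2$.

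For part $(i)$, the hypothesis $k\ge s$ lets me instantiate the nerve theorem at $j=s$: $\N(G)$ is $s$-connected if and only if $\N(G[S])$ is $(s-2)$-connected. But \Cref{lovasz} applied to $G[S]$ gives $\conn(\N(G[S]))\le \chi(G[S])-3\le s-3$, so $\N(G[S])$ is not $(s-2)$-connected, and hence $|S|\ge \conn(\N(G))+1$. For the lower bound in part $(ii)$: under $k\le\conn(\N(G[S]))$ we have $\conn(\bN)\ge k+2>k$, so the nerve theorem at $j=k$ immediately gives $\conn(\N(G))\ge k$.

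The matching upper bound in part $(ii)$ requires a Mayer--Vietoris computation. Decompose $\N(G)=\N(G_1)\cup Y$ with $Y=\N(G_2)\cup\bigcup_{i=1}^{s}\Delta_i$. Parallel nerve arguments (using $b$ as the distinguished vertex on the $G_2$ side and $a$ on the $G_1$ side) yield $\tilde H_*(Y)\cong \tilde H_*(\N(G_2))$ and $\N(G_1)\cap Y\simeq \Sigma\N(G[S])$; the latter follows from \Cref{lem:suspension} applied to $\N(G_1)\cap Y=T_0\cup\bigcup_{i=1}^{s}T_i$, where $T_0$ is the simplex on $S$ and $T_i=\Delta_i\cap\N(G_1)$ is the simplex on $N_G(x_i)\cap V(G_1)$, noting that the $T_i$'s together form a cone with apex $a$ and $T_0\cap\bigcup_i T_i=\N(G[S])$. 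The vanishing $\tilde H_i(\N(G[S]))=0$ for $i\le k$ collapses the Mayer--Vietoris sequence around degree $k+1$ to $\tilde H_{k+1}(\N(G))\cong \tilde H_{k+1}(\N(G_1))\oplus \tilde H_{k+1}(\N(G_2))$; since $\conn(\N(G_j))=k$ for at least one $j$, the Hurewicz theorem forces $\tilde H_{k+1}(\N(G_j))\neq 0$, hence $\N(G)$ is not $(k+1)$-connected. Therefore $\conn(\N(G))=k\le\conn(\N(G[S]))\le s-3$ by one more application of \Cref{lovasz}, giving $|S|\ge \conn(\N(G))+3$. The main obstacle is really the identification $\bN\simeq\Sigma^2\N(G[S])$ in paragraph two — correctly recognising that the simplex $J'\cup\{s+1,s+2\}$ enters $\bN$ precisely when $J'\in\N(G[S])$; once that is in hand, both conclusions drop out from the nerve theorem together with Lov{\'a}sz's bound on $\conn(\N(G[S]))$.
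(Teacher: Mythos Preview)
Your argument follows the paper's route closely: same cover $\{\Delta_1,\ldots,\Delta_{s+2}\}$, same identification $\bN\simeq\Sigma^2\N(G[S])$ via \Cref{lem:suspension} (the paper decomposes $\bN$ into exactly your $\bN'$ and your $A\cup B$), same use of \Cref{lovasz} on $G[S]$ together with \Cref{thm:nerve}(ii) for part~(i), and the same splitting $\N(G)=\N(G_1)\cup Y$ with $\N(G_1)\cap Y\simeq\Sigma\N(G[S])$ for part~(ii).

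The one real deviation is the endgame for (ii). The paper finishes in one line with \Cref{thm:intersection}: if $\N(G)$ were $(k+1)$-connected, then since $Y_1\cap\N(G_2)\simeq\Sigma\N(G[S])$ is $(k+1)$-connected, \Cref{thm:intersection} would force $\N(G_2)$ to be $(k+1)$-connected, contradicting $\conn(\N(G_2))=k$. You instead run Mayer--Vietoris and appeal to Hurewicz to deduce $\tilde H_{k+1}(\N(G_j))\neq 0$ from $\conn(\N(G_j))=k$. That step fails at $k=0$: for a path-connected space with nontrivial $\pi_1$, Hurewicz only gives $\tilde H_1\cong\pi_1^{\mathrm{ab}}$, which vanishes whenever $\pi_1$ is perfect. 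Nothing in the hypotheses of (ii) excludes $k=0$. The gap is easily patched --- Van Kampen in place of Mayer--Vietoris handles $k=0$, or one can simply quote \Cref{thm:intersection} as the paper does, which works uniformly at the homotopy level and spares you the separate computation of $\tilde H_*(Y)$. Your additional lower bound $\conn(\N(G))\ge k$ is correct but not needed for the stated inequality.
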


	\begin{proof}
	If $S = \emptyset$, then $G$ is a disconnected graph and therefore $\N(G)$	is disconnected. Since connectivity of a disconnected space is $-1$, result is true in this case. So assume that $S \neq \emptyset$.  Let $S = \{x_1, \ldots, x_n\}$.	For each $1 \leq i \leq n$, let $\Delta_{x_i}$ be a simplex on vertex set $N_G(x_i)$, $\Delta_{n+1} = \N(G_1)$ and $\Delta_{n+2} = \N(G_2)$. Clearly, $\N(G) = \Delta_{n+1} \cup \Delta_{n+2} \cup \bigcup\limits_{i=1}^{n} \Delta_{x_i}$. Let  $\mathbf{N} := \mathbf{N}(\{\Delta_1, \Delta_2, \Delta_{x_i,  1 \leq i \leq n} \} )$.  For a simplicial complex $\mathcal{K}$, let $\M(\mathcal{K})$ denotes the set of maximal simplices of $\mathcal{K}$.  We first prove the following:
		\begin{claim} \label{claim:intersection}  $\mathbf{N} \simeq \Sigma (\Sigma (\N(G[S])))$.
		\end{claim}
		
		\begin{proof}[Proof of Claim \ref{claim:intersection}] 
			Observe that  $\M(\mathbf{N} )= \{\sigma \cup \{n+1, n+2\} \ | \ \sigma \in \M(\N(G[S]))\} \cup \{S  \cup \{n+1\} , S  \cup \{n+2\} \}$.  Write    $\mathbf{N} = X_1 \cup X_2$, where, $\M(X_1) = \{\sigma \cup \{n+1, n+2\} \ | \ \sigma \in \M(\N(G[S]))\} $ and $\M(X_2) = \{S \cup \{n+1\}, S \cup \{n+2\}\}$.  Observe that, $\M(X_1 \cap X_2) = \{\sigma \cup \{n+
			1\} \ | \ \sigma \in  \M(\N(G[S])) \} \cup \{\sigma \cup \{n+2\} \ | \ \sigma \in \M(\N(G[S])) \}$ and thus we conclude that $ X_1 \cap X_2 \simeq \Sigma(\N(G[S]))$. Since $X_1$ and $X_2$ are contractible, the result follows from Lemma \ref{lem:suspension}. This completes the proof of Claim \ref{claim:intersection}.
		\end{proof}
		
		\begin{itemize}
		    \item[($i$)] 	Let $k \geq n$. Since $\chi(G[S]) \leq n$, by \Cref{lovasz} we see that $\conn(\N(G[S])) \leq n-3$. Hence,  $\conn(\mathbf{N}) \leq n-1$ by Claim \ref{claim:intersection}. Observe that each non-empty intersection $\Delta_{i_1} \cap \ldots \cap \Delta_{i_t}, t \geq 2$ is a simplex for $\{i_1, \ldots, i_t\} \subseteq S \cup \{n+1,n+ 2\}$. Also, since $\Delta_{n+1}, \Delta_{n+2}$ are $n$-connected and $\Delta_{x_i}$ is a simplex for all $1 \leq i \leq n$, we see that each non-empty intersection $\Delta_{i_1} \cap \ldots \cap \Delta_{i_t}$ is $(n-t+1)$-connected for all $t \geq 1$ and $i_1, \ldots, i_t \in S \cup \{n+1,n+2\}$. Since $\conn(\mathbf{N}) \leq n-1$, using 
		Theorem \ref{thm:nerve}$(ii)$ we conclude that $\conn(\N(G)) \leq n-1$. 
		\item[($ii$)] 	Without loss of generality, we assume that $k = \conn(\N(G_2))$. 	Let $Y_1 = \N(G_1) \cup \bigcup\limits_{i=1}^n \Delta_{x_i}$. Clearly, $\N(G) = Y_1 \cup \N(G_2)$.  Observe that $\M(Y_1 \cap \N(G_2))= \{ N_{G_2}(x) \ | \ x \in S\}  \cup \{S\}$.  For each $1 \leq i \leq n$,  let $Z_{x_i}$ be the  simplex on vertex set  $N_{G_2}(x_i)$ and $Z_{n+1}$ be the simplex on vertex set  $S$. Clearly, $Y_1 \cap \N(G_2) = Z_{n+1} \cup \bigcup\limits_{i=1}^{n} Z_{x_i}$.
	
		Observe that $\M(\mathbf{N}(\{Z_{n+1}, Z_{x_i, 1 \leq i \leq n}\}) ) = \{S \} \cup \{\sigma \cup \{n+1\} \ | \ \sigma \in \M(\N(G[S]))\}$. Since $V(\N(G[S])) \subseteq S$, we see that $ \mathbf{N}(\{Z_{n+1}, Z_{x_i, 1 \leq i \leq n}\}) \simeq \Sigma (\N(G[S]))$. Since each $Z_{x_i}$  and $Z_{n+1}$ are simplices, any  non-empty finite intersection of these simplices is also a simplex and hence contractible. 
		Thus $Y_1 \cap \N(G_2) \simeq \mathbf{N}(\{Z_{n+1}, Z_{x_i, 1 \leq i \leq n}\}) \simeq \Sigma(\N(G[S]))$. Therefore, $Y_1\cap \N(G_2)$ is at least $(k+1)$-connected. If $\N(G)$ is $(k+1)$ connected, then from Lemma  \ref{thm:intersection}, $\N(G_2)$ has to be $(k+1)$-connected, which is a contradiction. Hence, $\conn(\N(G)) \leq k $. If 
		$k \leq \conn(\N(G[S])) $, then we have $\conn(\N(G)) \leq k \leq \conn(\N(G[S])) \leq n-3$.
	
		\end{itemize}
	\end{proof}

	 The {\it complement graph} $\bar{G}$ of  $G$ is the graph with vertex set same as of  $V(G)$  and $E(\bar{G}) = \{(x, y) | \ (x, y) \notin E(G)\} $.	A graph $G$ is called \emph{weakly triangulated}, if it has no induced subgraph isomorphic to  a cycle with five or more vertices, or to the complement of such a cycle.

	\begin{thm}\cite[Theorem 1] {hayward} \label{thm:hayward} Let $S$ be a minimal vertex cut of a weakly triangulated graph $G$ and let $S$ induces a connected subgraph of $\bar{G}$. Then each component of $G - S$ includes at least one vertex adjacent to all the vertices of $S$.
		
	\end{thm}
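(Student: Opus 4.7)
The plan is to argue by contradiction: assume some component $C$ of $G-S$ contains no vertex adjacent to every element of $S$. The minimality of $S$ guarantees that each $s\in S$ has a neighbour in $C$. The goal is to combine this with the connectedness of $\bar G[S]$ to construct either an induced cycle of length $\geq 5$ in $G$ or the complement of one, contradicting weak triangulation.

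First I would choose $v\in C$ that maximises $|N(v)\cap S|$ and write $T=N(v)\cap S\subsetneq S$. Since $\bar G[S]$ is connected, there exist $s\in T$ and $t\in S\setminus T$ with $s\not\sim t$ in $G$. By the minimality of $S$, the vertex $t$ has a neighbour in $C$, so one can pick a shortest path $P\colon v=w_0,w_1,\dots,w_k$ inside $G[C]$ whose terminal vertex is the first on $P$ to be adjacent to $t$. Shortness of $P$ forces $P$ to be chordless, and no $w_i$ with $i<k$ is adjacent to $t$.

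The candidate forbidden subgraph is $G[\{s,v,w_1,\dots,w_k,t\}]$. The chord constraints on the closed walk $s,v,w_1,\dots,w_k,t,s$ are tight: shortness of $P$ kills $w_iw_j$ for $|i-j|>1$ and $tw_i$ for $i<k$; membership conditions kill $tv$ (since $t\notin T$); and by construction $s\not\sim t$. Thus the only edges that can obstruct an induced structure of length $k+3$ are chords of the form $sw_i$. The plan is to argue that if $k\geq 2$, then either the cycle (when $s\sim t$) is induced and yields an induced $C_{\geq 5}$ in $G$, or a chord $sw_i$ lets one reroute to a strictly shorter but still induced cycle of length $\geq 5$ on one side of the chord; when $s\not\sim t$ one passes to $\bar G$ and extracts an induced $\bar C_{\geq 5}$ from the same vertex set by the dual calculation.

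The main obstacle is the small case $k=1$, where the cycle above has length $4$ and gives no contradiction directly. To handle it, I would use the maximality of $|N(v)\cap S|$ to locate a second vertex $s'\in T$ with $s'\not\sim w_1$ (such an $s'$ must exist because $w_1\sim t$ but $v\not\sim t$ forces $|N(w_1)\cap S|\leq|N(v)\cap S|$ to miss some element of $T$), and then analyse the induced subgraph on $\{s,s',t,v,w_1\}$, exhibiting an induced $C_5$ or $\bar C_5$ by checking the handful of admissible adjacency patterns for the ambiguous pairs $ss',\ sw_1,\ s't,\ st$. The delicate part throughout is coordinating the three tools available — minimality of $S$, shortness of $P$, and maximality of $|N(v)\cap S|$ — and deciding in each subcase whether the obstruction surfaces in $G$ or only in $\bar G$.
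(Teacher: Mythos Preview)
The paper does not contain a proof of this theorem: it is quoted from Hayward \cite{hayward} and invoked as a black box in the proof of Theorem~\ref{thm:weaklytriangulated}. There is therefore nothing in the present paper to compare your proposal against.

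That said, your sketch has an internal inconsistency worth flagging. You explicitly select $s\in T$ and $t\in S\setminus T$ with $s\not\sim t$ in $G$ (this is precisely where connectedness of $\bar G[S]$ enters), and you then confirm ``by construction $s\not\sim t$''; yet two sentences later you branch on ``when $s\sim t$''. Since $s\not\sim t$ is fixed, the walk $s,v,w_1,\dots,w_k,t$ never closes to a cycle in $G$, and your case split is vacuous on one side. In an argument of this shape the genuine branching is on the pattern of $s$--$w_i$ adjacencies (and, in the short cases, on further vertices of $S$), not on whether $s\sim t$; extracting an induced $C_{\ge 5}$ or $\bar C_{\ge 5}$ from that data --- especially the passage to $\bar G$ --- requires more care than your outline provides.
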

	
	The above property of weakly triangulated graphs and Theorem \ref{thm:vertexconnectivity} imply  the following statement. 
	\begin{thm} \label{thm:weaklytriangulated} Let $G$ be  a weakly triangulated graph and $S$ be a minimal vertex cut of $G$ which induces a connected subgraph of $\bar{G}$. Let $G_1, G_2$  be  subgraphs of $G$ such that 
		 $G = G_1 \cup G_2$ and    $V(G_1 \cap G_2) = S$. If for  $k = \min\{\conn(\N(G_1)), \conn( \N(G_2))\}$,  either $k \geq |S|$ or  $k \leq \conn(\N(G[S]))$, then 
		$|S| \geq 	conn(\N(G)) +1$.
	
	\end{thm}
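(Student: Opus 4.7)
The plan is to reduce this to Theorem \ref{thm:vertexconnectivity}, whose hypothesis about $a \in V(G_1)$ and $b \in V(G_2)$ being adjacent to every vertex of $S$ is exactly what Hayward's theorem (\Cref{thm:hayward}) will supply. The only nontrivial verification is that each ``side'' of the decomposition $G = G_1 \cup G_2$ contains a component of $G-S$ in which Hayward's theorem applies.

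First, I would observe that because $V(G_1) \cap V(G_2) = S$ and $G = G_1 \cup G_2$, any edge of $G$ lies in $G_1$ or $G_2$, so there are no edges of $G$ between $V(G_1) \setminus S$ and $V(G_2) \setminus S$. Consequently, every connected component of $G - S$ lies entirely in $V(G_1) \setminus S$ or entirely in $V(G_2) \setminus S$. Since $S$ is a vertex cut of $G$ and the decomposition is nontrivial, at least one component of $G - S$ lies in $V(G_1) \setminus S$ and at least one lies in $V(G_2) \setminus S$.

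Next, since $G$ is weakly triangulated, $S$ is a minimal vertex cut, and $S$ induces a connected subgraph of $\bar{G}$, \Cref{thm:hayward} applies: every component of $G - S$ contains at least one vertex adjacent to all of $S$. Picking such a vertex from a component lying in $V(G_1) \setminus S$ yields $a \in V(G_1)$ with $a \sim x$ for all $x \in S$, and similarly we obtain $b \in V(G_2)$ with $b \sim x$ for all $x \in S$.

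Finally, with these vertices $a$ and $b$ in hand, the hypotheses of \Cref{thm:vertexconnectivity} are satisfied. Under either condition $k \geq |S|$ or $k \leq \conn(\N(G[S]))$, that theorem gives $|S| \geq \conn(\N(G)) + 1$ (the second case in fact yields the stronger bound $|S| \geq \conn(\N(G)) + 3$). This completes the reduction. I do not anticipate a serious obstacle here; the main care point is simply the bookkeeping in step one to ensure each of $V(G_1) \setminus S$ and $V(G_2) \setminus S$ is nonempty and contains a component of $G-S$, so that Hayward's theorem produces the required $a$ and $b$ on opposite sides.
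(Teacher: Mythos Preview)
Your proposal is correct and follows exactly the paper's approach: the paper's proof is the single sentence ``Proof easily follows from Theorem \ref{thm:hayward} and Theorem \ref{thm:vertexconnectivity},'' and your write-up simply spells out the reduction (using Hayward's theorem to obtain $a$ and $b$, then invoking \Cref{thm:vertexconnectivity}) with the natural bookkeeping the paper leaves implicit.
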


	\section{Vertex connectivity and neighbourhood complexes of chordal graphs}

	In this section, we show that  for stiff chordal graphs, the vertex connectivity completely determines the connectivity of its neighbourhood complex. The following theorem is a generalization of  \Cref{thm:simplyconnected}.
	
	\begin{thm}\label{thm:general}
		
		Let $G$ be a graph and $n \geq 0$ be an integer. Let $v\in V(G)$  such that for each $S \subseteq N(v), |S| \leq n+1$ there exists a vertex $v_S \neq v$ satisfying $S \subseteq N_G(v_S)$.  Then for all   $k \leq n$,  $\N(G - \{v\})$ is $k$-connected implies  $\N(G)$ is $k$-connected. 
	\end{thm}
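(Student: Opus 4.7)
The plan is to mimic the proof of \Cref{thm:simplyconnected} one dimension higher, replacing ``path connected'' everywhere by ``$(n-1)$-connected''. Fix $k \le n$ and assume $\N(G-\{v\})$ is $k$-connected. Set $S = N_G(v)$ and, for each $x \in S$, let $\Delta_x$ denote the simplex on vertex set $N_G(x)\setminus\{v\}$. Since $\sigma \cup \{v\}$ has a common neighbour in $G$ precisely when some $w \in S$ satisfies $\sigma \subseteq N_G(w)\setminus\{v\}$, we have $lk_{\N(G)}(v) = \bigcup_{x \in S}\Delta_x$.

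The first step is to show $lk_{\N(G)}(v)$ is $(n-1)$-connected. Each nonempty intersection $\bigcap_{x \in T}\Delta_x$ is itself a simplex and hence contractible, so \Cref{thm:nerve}(i) gives a homotopy equivalence $lk_{\N(G)}(v) \simeq \mathbf{N}(\{\Delta_x\}_{x \in S})$. I will then identify the nerve with $\N(G-\{v\})[S]$ directly from definitions: a subset $T\subseteq S$ lies in the nerve iff some $u\neq v$ belongs to $N_G(w)\setminus\{v\}$ for every $w\in T$, i.e.\ iff $T$ admits a common neighbour in $G-\{v\}$. The hypothesis says every $T \subseteq S$ with $|T|\le n+1$ has such a common neighbour, so $\N(G-\{v\})[S]$ contains the entire $n$-skeleton of the full simplex on $S$. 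Using that this $n$-skeleton is $(n-1)$-connected together with the fact that attaching cells of dimension $\ge n+1$ cannot affect $\pi_i$ for $i<n$, conclude that $\N(G-\{v\})[S]$, and hence $lk_{\N(G)}(v)$, is $(n-1)$-connected; in particular it is $(k-1)$-connected.

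The remaining steps are the higher-dimensional analogues of the gluing arguments in \Cref{thm:simplyconnected}. Let $\Delta$ be the simplex on $S$ and let $X$ be the induced subcomplex of $\N(G)$ on $V(G)\setminus\{v\}$. As before $X = \N(G-\{v\})\cup\Delta$ and $\N(G-\{v\})\cap\Delta = \N(G-\{v\})[S]$. Since $\N(G-\{v\})$ is $k$-connected, $\Delta$ is contractible, and the intersection is $(k-1)$-connected by step one, \Cref{thm:union} yields that $X$ is $k$-connected. Finally $\N(G) = X\cup st_{\N(G)}(\{v\})$ with $X\cap st_{\N(G)}(\{v\}) = lk_{\N(G)}(\{v\})$; since the star is contractible, $X$ is $k$-connected, and the link is $(k-1)$-connected, a second application of \Cref{thm:union} gives that $\N(G)$ is $k$-connected.

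The main technical point to watch is the second half of step one, namely that containing the $n$-skeleton of a simplex on $S$ forces $(n-1)$-connectivity of $\N(G-\{v\})[S]$. Degenerate cases ($|S|\le n+1$, where the nerve is in fact the full contractible simplex on $S$) must be handled, and the CW-attachment observation that cells of dimension at least $n+1$ do not lower connectivity below $n$ should be stated explicitly, since everything else in the argument is a direct reuse of the tools already employed in \Cref{thm:simplyconnected}.
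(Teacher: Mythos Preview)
Your proposal is correct and follows essentially the same approach as the paper: both proofs decompose $\N(G)$ as $Y\cup st_{\N(G)}(v)$ with intersection $lk_{\N(G)}(v)$, show the link is $(n-1)$-connected via the nerve of the cover $\{\Delta_x\}_{x\in S}$ by observing this nerve has full $n$-skeleton, and then show $Y=\N(G-\{v\})\cup\Delta$ is $k$-connected via \Cref{thm:union}. The only cosmetic difference is that you explicitly identify both the nerve and $\N(G-\{v\})\cap\Delta$ with the single complex $\N(G-\{v\})[S]$, whereas the paper argues the full-$n$-skeleton property for the nerve and for $X\cap\Delta$ separately; your identification is correct and slightly streamlines the exposition, but the underlying argument is identical.
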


	\begin{proof}
	Let   $N_G(v) = \{x_1, \ldots, x_r\}$.	Assume that  $\N(G-\{v\})$ is $k$-connected.

		\begin{claim}  \label{claim:link}
			The simplicial complex 	$lk_{\N(G)}(v)$ is at least $(n-1)$-connected.
		\end{claim}
		\begin{proof}[Proof of Claim \ref{claim:link}]
			For each $1 \leq i \leq r$, let  $\Delta_i$ be a simplex on vertex set $N_G(x_i) \setminus \{v\}$. Then observe that $lk_{\N(G)}(v) = \Delta_1\cup \ldots \cup \Delta_r$. Clearly,  any non-empty finite intersection $\Delta_{i_1} \cap \Delta_{i_2} \cap \ldots \cap \Delta_{i_t}$, $1 \leq t \leq r$ is a simplex and therefore contractible. Hence, by Theorem \ref{thm:nerve}$(i)$, $lk_{\N(G)}(v)$ and the nerve $\mathbf{N}(\{\Delta_i\})$ are homotopy equivalent. Let $S= \{i_1, \ldots, i_t\} \subseteq  \{1, \ldots, r\}$ and $t\leq n+1$. Then by assumption there exists a vertex $v_S \neq v$ such that $\{x_{i_1}, \ldots, x_{i_t}\} \subseteq N_G(v_S)$ and thereby showing that $v_S \in \bigcap\limits_{i \in S}\Delta_{i}$. Hence $S \in \bN(\{\Delta_i\})$. Thus $\bN(\{\Delta_i\})$ has full $n$-skeleton and therefore result follows.
			This completes the proof of Claim \ref{claim:link}.
		\end{proof}

		Let $X = \N(G - \{v\})$ and $Y = \N(G)[V(G) \setminus \{v\}]$.  We now show that $Y$ is at least $k$-connected.
		Let $\Delta$ be a simplex on vertex set $N_G(v)$. Observe that $X \cup \Delta= Y$ and $V(X \cap \Delta) = N_G(v)$.  Let $\sigma \subseteq N_G(v)$ such that  $|\sigma| \leq n+1$. By assumption there exists a vertex $v_{\sigma}\neq v$ such that $\sigma \subseteq N_G(v_{\sigma})$ and thereby showing that $\sigma \in X \cap \Delta$. Hence, $X \cap \Delta$  has a full $n$-skeleton, and therefore it is at least $(n-1)$-connected. Since $k \leq n$, $X \cap \Delta$ is at least $(k-1)$-connected. Further, since $X$ is $k$-connected  and $\Delta$ is contractible, using Lemma \ref{thm:union} we conclude that $Y$ is $k$-connected. 
		
		Observe that  $\N(G) = Y \cup st_{\N(G)}(v)$ and $Y \cap st_{\N(G)} = lk_{\N(G)}(v)$.
		Since $st_{\N(G)}(v)$ is a cone over $v$, it is contractible. Further, since $Y$ is $k$-connected, \Cref{thm:general} follows from Claim \ref{claim:link} and Lemma \ref{thm:union}. 
	\end{proof}

		As an application of Theorem \ref{thm:general}, we prove that under mild assumptions the neighbourhood complex  of $(n+1)$-connected chordal graph is $n$-connected.  To prove this, we first  establish  two lemmas for chordal graphs.
We recall the following result from \cite{bondy}.
	
		\begin{thm}\cite[Theorem 9.21]{bondy} \label{thm:simplicial} Every chordal graph which is not complete has two non-adjacent simplicial\footnote{ A vertex $v$ is called  {\it simplicial} if  $G[N_G(v)]$ is a clique.} vertices.
	\end{thm}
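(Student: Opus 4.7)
My plan is induction on $|V(G)|$. The base cases ($|V(G)| \leq 2$) are immediate: a non-complete chordal graph on at most two vertices consists of isolated points, each vacuously simplicial. If $G$ is disconnected, I simply pick a simplicial vertex from each of two components (obtained from the induction hypothesis applied to each component, or trivially when a component is a single vertex); these are automatically non-adjacent.

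For the core case where $G$ is connected but not complete, I choose non-adjacent $a, b \in V(G)$, take a minimal $ab$-separator $S$, and let $A, B$ denote the vertex sets of the components of $G - S$ containing $a$ and $b$ respectively. The crucial structural step is the classical lemma of Dirac: \emph{every minimal vertex separator in a chordal graph induces a clique}. I would prove this by first observing that minimality of $S$ forces each vertex of $S$ to have a neighbour in both $A$ and $B$ (otherwise $S$ could be shrunk). If $x, y \in S$ were non-adjacent, I could then concatenate a shortest $xy$-path with interior vertices in $A$ with a shortest one with interior vertices in $B$; this gives a cycle of length $\geq 4$ whose only potential chord is $xy$ itself (internal $A$-vertices are not adjacent to internal $B$-vertices since $A, B$ belong to different components of $G-S$, and shortest-path minimality rules out chords within each half), contradicting chordality. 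This separator lemma is where the real work lies and is the main obstacle.

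With the lemma in hand, I apply the induction hypothesis to the proper chordal subgraphs $G[S \cup A]$ and $G[S \cup B]$. If $G[S \cup A]$ is a complete graph, pick any $a' \in A$: since $S$ separates $A$ from $B$, $N_G(a') \subseteq S \cup A$, which is a clique, so $a'$ is simplicial in $G$. Otherwise induction yields two non-adjacent simplicial vertices of $G[S \cup A]$; since $S$ is a clique, at most one of these vertices can lie in $S$, so at least one lies in $A$. Calling it $a'$, the inclusion $N_G(a') \subseteq S \cup A$ together with its simpliciality in $G[S\cup A]$ shows it is simplicial in $G$. Obtain $b' \in B$ symmetrically. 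Since $A$ and $B$ belong to distinct components of $G - S$, the vertices $a'$ and $b'$ are non-adjacent in $G$, completing the proof.
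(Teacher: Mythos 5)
Your proof is correct. The paper gives no proof of this statement --- it is quoted verbatim from Bondy--Murty (Theorem 9.21) --- and your argument is precisely the classical one of Dirac that the cited source is based on: induction on $|V(G)|$, the lemma that a minimal separator of a chordal graph induces a clique (proved by the two-shortest-paths cycle argument), and the observation that a simplicial vertex of $G[S\cup A]$ lying in $A$ remains simplicial in $G$ because its whole neighbourhood is contained in $S\cup A$. All the steps check out, including the two points where care is needed: each vertex of a minimal $ab$-separator has a neighbour in both $A$ and $B$, and since $S$ is a clique at most one of the two non-adjacent simplicial vertices supplied by induction can lie in $S$.
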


	\begin{lem} \label{lem:simplicial}Let $n \geq 1$ and let $G$ be a $n$-connected non-complete chordal graph. Then there exists a  simplicial vertex $v$ such that $\chi(G- \{v\}) = \chi(G)$ and $G - \{v\}$ is  $n$-connected.  
	\end{lem}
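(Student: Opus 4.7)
The plan is to use \Cref{thm:simplicial} to pick out two non-adjacent simplicial vertices, use the existence of a maximum clique to select which of them to remove for preserving $\chi$, and then give a short direct argument (using simpliciality) that removing any simplicial vertex from an $n$-connected chordal graph preserves $n$-connectivity.

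First, apply \Cref{thm:simplicial} to obtain two non-adjacent simplicial vertices $u,w$ of $G$. Since chordal graphs are perfect, $\chi(G)=\omega(G)$; pick any maximum clique $K$ of $G$, so $|K|=\chi(G)$. Because $u\not\sim w$ and $K$ is a clique, at most one of $u,w$ lies in $K$; choose $v\in\{u,w\}$ with $v\notin K$. Then $K$ is a clique of $G-\{v\}$ of size $\chi(G)$, which forces $\chi(G-\{v\})\ge \chi(G)$, hence $\chi(G-\{v\})=\chi(G)$.

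It remains to show $G-\{v\}$ is $n$-connected. The main idea is that simpliciality of $v$ prevents removing it from creating a new small cut. Suppose for contradiction that $S\subseteq V(G)\setminus\{v\}$ is a vertex cut of $G-\{v\}$ with $|S|\le n-1$, and let $C_1,\ldots,C_r$ ($r\ge 2$) be the components of $G-\{v\}-S$. Because $v$ is simplicial, $N_G(v)$ is a clique in $G$, and therefore $N_G(v)\setminus S$ is a clique in $G-\{v\}-S$; being connected, it is contained in a single component $C_i$ (and if $N_G(v)\setminus S=\emptyset$, then $v$ is isolated in $G-S$). In either case $G-S$ is disconnected, so $S$ is a vertex cut of $G$ of size $\le n-1$, contradicting $n$-connectivity of $G$.

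I expect step (3), the connectivity argument, to be the only point needing real care: one must use simpliciality to confine $N_G(v)\setminus S$ to one component and handle the edge case $N_G(v)\subseteq S$ (where $v$ becomes isolated), both of which are short. Steps (1)--(2) are essentially bookkeeping: once one notices that $u\not\sim w$ implies $\{u,w\}\not\subseteq K$, the choice of $v$ is forced and preservation of $\chi$ is immediate from perfection of chordal graphs.
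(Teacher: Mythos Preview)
Your proof is correct, and for the connectivity part it is genuinely different from---and considerably shorter than---the paper's argument. The paper works directly with its Menger-style definition: given $x,y\in V(G-\{v\})$ it takes $n$ internally disjoint shortest $xy$-paths in $G$ and carries out a page-long case analysis to reroute the path containing $v$, with a delicate subcase when $\deg(v)=n$ and $x,y\in N_G(v)$ that requires introducing an auxiliary vertex $w\notin\{v\}\cup N_G(v)$ and juggling two further families of $n$ paths. Your cut-based argument bypasses all of this: simpliciality of $v$ forces the clique $N_G(v)\setminus S$ into a single component of $(G-\{v\})-S$, so any small cut of $G-\{v\}$ is already a cut of $G$. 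One small point worth adding: to pass from ``no cut of size $\le n-1$'' to ``$n$-connected'' you also need $|V(G-\{v\})|\ge n+1$; this holds because an $n$-connected noncomplete graph must have at least $n+2$ vertices (minimum degree $\ge n$ on $n+1$ vertices would force $G=K_{n+1}$). The paper's longer route has the minor advantage of staying entirely within its path-based definition of $k$-connectivity, but your argument is the natural one and the equivalence with the vertex-cut formulation is standard. For preserving $\chi$, both proofs use the same idea (perfection plus avoiding a maximum clique); you spell it out more explicitly.
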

	
	\begin{proof}
		Since chordal graphs are perfect graphs and $G$ is a non-complete,   Theorem \ref{thm:simplicial} implies that there exists a simplicial vertex $v$ such that $\chi(G -  \{v\}) = \chi(G)$. We show that $G - \{v\}$ is $n$-connected. To prove this, it is enough to show that for any two vertices $x, y \in V(G- \{v\})$ there exist $n$ internally disjoint $xy$-paths  in $G - \{v\}$.
		Let $x, y \in V(G - \{v\})$. Since $G$ is $n$-connected, we get $n$ internally disjoint $xy$-paths $P_1, \ldots, P_n$ in $G$.  Without loss of generality, we  assume that all these paths are the shortest disjoint paths. If $v \notin V(P_i)$ for all $1 \leq i \leq n$, then all $P_i^{'}s$ are $xy$-paths in $G-\{v\}$ and we are done. 
		
		So, assume that there  exists  $1 \leq i \leq n$ such that $v \in V(P_i)$. Without loss of generality, we  assume that $i=1$. We  will replace the path $P_1$ by a $xy$-path $P_1'$ in $G-\{v\}$ such that $P_1'$ is internally disjoint from $P_i$ for all $2 \leq i \leq n$.

		If $x, y \notin N_G(v)$, then there exist $w, w' \in V(P_1)$ distinct from $x$ and $y$ such that $w \sim v  \sim w'$. Since $v$ is a simplicial vertex,  $w \sim w'$ and we get  a $xy$-path $P_1^{'} = x \ldots w  w' \ldots y$ from $P_1 = x \ldots w vw' \ldots y$ by removing $v$ from $P_1$, which contradict the fact that $P_1$  is  the shortest path. Hence, $\{x, y\} \cap N_G(v) \neq \emptyset$. Now, suppose  $|\{x, y\} \cap N_G(v)| = 1$ and say  
		$x \sim v$.  Let   $P_1 = x \ldots v w_1 \ldots w_k y, k \geq 1$.  In this case we  can replace the path $P_1$ by  $P_1' = x w_1 \ldots w_k y$, which is again a contradiction. Hence,  $x, y \in N_G(v)$. Let $N_G(v) = \{x, y, z_1, \ldots, z_m\}$.  If there exists a $z \in N_G(v)$ different from $x, y$ such that $z \notin V(P_i)$ for all $1 \leq i \leq n$, then we replace the path $P_1$ by the path $P_1'= x z y$.  Since $G$ is $n$-connected, $m \geq n-2$. If $m \geq n-1$, then we can easily construct $m+1$ internally disjoint $xy$-paths, namely $P_1 = xy, P_2 = xz_1 y, \ldots, P_{m+1} = x z_m y$.  
		
		So, assume that $m = n-2$, {\it i.e.}, $deg(v) = n$  and for each $z \in N_G(v)$ there exists an $i$ such that $z \in V(P_i)$.	Since $P_1, \ldots, P_n$ are the shortest paths, we can assume that $P_1 = x v y, P_2 = xy, P_3 = xz_1 y, \ldots, P_n = x z_{n-2} y$.
		
		Since $G$ in non-complete, there exists $w \in V(G)$ such that $w \notin \{v\} \cup N_G(v)$. Further,  since $G$ is $n$-connected, we have $n$ internally  disjoint $wx$ paths $L_1, \ldots, L_n$  and $n$ internally disjoint $wy$ paths $Q_1, \ldots, Q_n$  in $G$. 
		We consider the following cases.
		
		{\bf Case 1.} $v$ does not belong to $V(L_i)$ or $V(Q_j)$ for all $1 \leq i, j \leq n$.
		
		If $x \sim w$ and $y \sim w$, then we replace $P_1$ by $xwy$. If $x \not\sim w$ and $y \sim w$, then 	since $deg(v) = n$, there exists $j_1$ such that $V(L_{j_1})$  is disjoint from $y, z_1, \ldots, z_{n-2}$. 
		Then we replace $P_1$ by the path $L_{j_1}^{-1}y$. If $x \not\sim w$ and $y \not\sim w$, then there exist $j_1$ and $j_2$ such that $\{y, z_1, \ldots, z_{n-2}\}  \cap V(L_{j_1}) = \emptyset$ and $\{x, z_1, \ldots, z_{n-2}\} \cap V(Q_{j_2}) = \emptyset$. In this case we replace $P_1$ by $L_{j_1}^{-1}Q_{j_2}$.
		
		{\bf Case 2.} There exist $i_0$ and $j_0$ such that $v$ belong to $V(L_{i_0})$ and $V(Q_{j_0})$.
		
		Since $v \in V(L_{i_0})$ and $w \nsim v$, there exists $t_1 \in \{y, z_1, \ldots, z_{n-2}\}$ such that $t_1 \in V(L_{i_0})$.
		Then $v$ and $t_1$ do not belong to $V(L_l)$ for any $1 \leq l \leq n, l \neq i_0$. There exists $i_1$ such that $V(L_{i_1}) \cap \{v, y, z_1, \ldots, z_{n-2}\} = \emptyset$. By similar argument there exists $j_1$ such that 
		$V(Q_{j_1}) \cap \{v, x, z_1, \ldots, z_{n-2}\} = \emptyset$. Then, we replace the path $P_1$ by the path 
		$L_{i_1}^{-1} Q_{j_1}$, which is internally disjoint from $P_1, \ldots, P_n$.
		
		{\bf Case 3.}  There exists $i_0$ such that $v \in V(L_{i_0})$ and $v \notin V(Q_j)$ for all $j$.
		
		By similar argument as of Case 2, there exists $i_1$ such that $V(L_{i_1}) \cap \{v, y, z_1, \ldots, z_{n-2}\} = \emptyset$. Since $v \notin V(Q_j)$ for all $j$ and $|\{x, z_1,\ldots, z_{n-2} \}| = n-1$, there exists $j_1$ such that 
		$Q_{j_1}$ is disjoint from $v, x, z_1, \ldots, z_{n-2}$.  We replace the path $P_1$ by $L_{i_1}^{-1} Q_{j_1}$
		and get $n$ internally disjoint $xy$-paths.
		
	\end{proof}

	\begin{lem} \label{lem:neighbor}
		Let $n \geq 1$ and let $G$ be an $n$-connected non-complete chordal graph.  Let $v$ be a simplicial vertex such that $G - \{v\}$ is $n$-connected. Then for any  $m \leq n$ and $\{x_1, \ldots, x_m\} \subseteq N_G(v)$, there exists $v' \neq v$  such that   $\{x_1, \ldots, x_m \}\subseteq N_G(v')$. 
	\end{lem}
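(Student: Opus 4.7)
The plan is to split into two cases based on whether $m < \deg(v)$ or $m = \deg(v)$. Since $G$ is $n$-connected we have $\deg(v) \geq n$, and combined with $m \leq n$ this gives $m \leq \deg(v)$.

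If $m < \deg(v)$, simpliciality of $v$ makes $N_G(v)$ a clique, so any $v' \in N_G(v) \setminus \{x_1, \ldots, x_m\}$ (nonempty since $|N_G(v)| = \deg(v) > m$) is adjacent to every $x_i$ and distinct from $v$. The substantive case is $m = \deg(v)$; then $m \leq n \leq \deg(v)$ forces $m = n = \deg(v)$ and $\{x_1, \ldots, x_n\} = N_G(v)$. I argue by contradiction: assume no vertex distinct from $v$ is adjacent to all of $N_G(v)$. Then $K := \{v\} \cup N_G(v)$ is a maximal clique of $G$ of size $n+1$ (any extending vertex would give such a common neighbor different from $v$), and $K$ is the unique maximal clique containing $v$, because any clique through $v$ sits inside $K$.

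To derive a contradiction, I use a clique tree $T$ of the chordal graph $G$. Since $G$ is not complete, $G$ has at least two maximal cliques, so one can pick a maximal clique $K'' \neq K$ and let $K_1$ be the neighbor of $K$ on the $T$-path from $K$ to $K''$. The standard clique-tree property says that removing the edge $KK_1$ from $T$ partitions $V(G)$ into two sides meeting only in $K \cap K_1$ with no $G$-edges between them, so $K \cap K_1$ is a vertex separator of $G$. By $n$-connectivity of $G$, $|K \cap K_1| \geq n$; and since $v \notin K_1$, we have $K \cap K_1 \subseteq N_G(v)$, forcing $|K \cap K_1| \leq n$. Hence $K \cap K_1 = N_G(v)$, and any $y \in K_1 \setminus K$ (nonempty since $K_1$ is maximal and distinct from $K$) is adjacent to every $x_i$ and satisfies $y \neq v$, the desired contradiction.

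The main obstacle is invoking the clique-tree property cleanly in the paper's style. An equivalent, more elementary route is to observe that $N_G(v)$ is a minimum vertex separator of $G$ (it separates $v$ from any $u \notin \{v\} \cup N_G(v)$, which exists by non-completeness), and then to apply the classical chordal-graph fact that every minimal vertex separator has a vertex complete to it on each of its sides, yielding $v'$ directly. The chromatic hypothesis $\chi(G - \{v\}) = \chi(G)$ enters only via perfectness of chordal graphs, which produces a maximal clique of $G - \{v\}$ of size at least $n+1$; this is stronger than what the argument actually needs, since non-completeness of $G$ alone already provides a maximal clique of $G$ distinct from $K$.
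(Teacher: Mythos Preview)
Your argument is correct, and it takes a genuinely different route from the paper in the substantive case $\deg(v)=n$. The paper does not argue by contradiction via a clique tree of $G$; instead it works in $G-\{v\}$: it extends the clique $\{x_1,\ldots,x_m\}$ to a maximal clique $T$ of $G-\{v\}$ and applies its appendix proposition on simplicial decompositions to the $n$-connected, non-complete chordal graph $G-\{v\}$ to force $|T|\geq n+1$, so that any vertex of $T\setminus\{x_1,\ldots,x_m\}$ serves as $v'$. Thus the paper genuinely uses the hypothesis that $G-\{v\}$ is $n$-connected, while your clique-tree (or minimal-separator) argument does not, so you actually prove the lemma under weaker assumptions. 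Your alternative via minimal separators is the version closest to the paper's toolkit, since the paper already quotes the Dirac-type fact that minimal cuts of chordal graphs are cliques; the clique-tree formulation is cleaner but introduces machinery the paper never sets up. One correction to your closing remark: neither your proof nor the paper's uses the hypothesis $\chi(G-\{v\})=\chi(G)$ at all---the paper relies on $n$-connectivity of $G-\{v\}$, not on the chromatic condition---so that hypothesis is simply idle in the lemma as stated.
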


	\begin{proof}
		Let $\{x_1, \ldots, x_m\} \subseteq N_G(v), m \leq n$. Since $G$ is $n$-connected, $deg(v) \geq n$.  If $deg(v) \geq n+1$, then clearly there exists a vertex $v' \in N_G(v) \setminus \{x_1, \ldots, x_m\}$. Since $v$ is simplicial $v' \sim x_i$ for all $1 \leq i \leq m$. So assume $deg(v) = n$.
	If $G - \{v\}$ is a complete graph, then since $G$ is non-complete, there exists $w \in V(G)$ such that 
		$w \neq v$ and $w \notin N_G(v)$.  In this case, we take $v' = w$. Assume that  $G-\{v\}$ is non-complete. Let $T$ be a maximal clique of $G -\{v\}$ containing $\{x_1, \ldots, x_m\}$. Since $G -\{v\}$ is non-complete and $n$-connected,  using  Proposition \ref{prop:cliquesize}, we conclude that $T$ is of size greater than $n$ and result follows.

	\vspace{-0.47cm}	
		
	\end{proof}

		\begin{thm} \label{thm:chordal} Let $n \geq 0$ and let $G$ be an $(n+1)$-connected  chordal graph. If $G$ cannot be folded onto a clique of size $n+2$, then  $\N(G)$ is  $n$-connected. 
	\end{thm}
	
	\begin{proof}
		
		Since $G$ is $(n+1)$-connected and chordal, it has a clique of size at least $n+2$. Suppose each maximal clique of $G$ has size $n+2$ by \Cref{prop:cliquesize}. Since $G$ is not folded onto a clique of size $n+2$, $G$ has at least two maximal cliques. Let $V_1$ be a maximal clique of $G$.   Then  from  \Cref{thm:simplicaildecomposition} and \Cref{prop:cliquesize}, the maximal cliques of $G$ can be arranged in a sequence $(V_1, \ldots, V_k)$ such that   $V_j \cap (\bigcup\limits_{i=1}^{j-1} V_i)$ is a clique of size $n+1$ for $2 \leq j \leq k$. Since $V_k$ is a clique of size $n+2$, we see that  there exists a vertex $v_k \in V_k$ such that $v_k \notin \bigcup\limits_{i=1}^{k-1} V_i$. Further, since $V_k \cap (\bigcup\limits_{i=1}^{k-1} V_i)$ is a clique of size $n+1$, we see that $G$ is folded onto $G - \{v_k\}$. Clearly, $G-\{v_k\}$ has simplicial decomposition $(V_1, \ldots, V_{k-1})$. From \Cref{prop:cliquesize}, we observe that $G - \{v_k\}$ is also $(n+1)$-connected. Since $G$ is not folded onto a clique of size $n+2$, $G-\{v_k\}$ is not a complete graph. Now, by similar argument there exists a $v_{k-1} \in V_{k-1}$ such that 
		$G- \{v_k, v_{k-1}\}$ is an $(n+1)$-connected non-complete  chordal graph. Since $k$ is finite, after $k$-steps, $G$ is folded onto $V_1$, which is a contradiction. Thus, $G$ has a clique of size at least $n+3$.

		If $n = 0$, then since $G$ has a clique of size $3$, $\chi(G) \geq 3$. It is well known that for any connected graph $G$ of chromatic number greater than $2$, $\N(G)$  is path connected. Proof is by induction on the number of vertices of the graph $G$. If $G$ is isomorphic to a complete graph $K_p$, then $p \geq n+3$ and in this case $\N(G) \simeq S^{p-2}$ and result is true. So, we assume that $G$ is non-complete. By Lemma \ref{lem:simplicial}, there exists  a simplicial vertex $v$ such that $G-\{v\}$ is $(n+1)$-connected and $\chi(G) = \chi(G-\{v\})$. Since $G$ has a clique of size $n+3$ and $G-\{v\}$ is perfect graph,  we see that 
		$G-\{v\}$ also has a clique of size $n+3$ and therefore $G-\{v\}$ cannot be folded onto  a clique of size $n+2$. 
		By induction hypothesis, $\N(G-\{v\})$ is $n$-connected. By Lemma \ref{lem:neighbor}, for any $S \subseteq N_G(v)$ such that, $|S| \leq n+1$ there exists a vertex $v_S \neq v$ such that $S \subseteq N_G(v_S)$. The result follows from Theorem \ref{thm:general}.
	\end{proof}

	The following is an immediate corollary of Theorem \ref{thm:chordal}.

	\begin{cor} Let $G$ be an $n$-connected chordal graph. If $\conn(\N(G)) < n-1$, then $\chi(G) = n+1$.
	\end{cor}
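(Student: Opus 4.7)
The plan is to deduce this from \Cref{thm:chordal} by contraposition. Applying that theorem with its parameter $n$ replaced by $n-1$, it reads: for an $n$-connected chordal graph $G$, if $G$ is not folded onto a clique of size $n+1$, then $\N(G)$ is $(n-1)$-connected. Taking the contrapositive, the hypothesis $\conn(\N(G)) < n-1$ forces $G$ to be folded onto a copy of $K_{n+1}$.

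Once that fold is in hand I would finish by sandwiching $\chi(G)$ between $n+1$ and $n+1$. For the upper bound, every single fold $G \searrow G - \{u\}$ preserves the chromatic number (color $u$ with the color assigned to the vertex $v$ satisfying $N(u) \subseteq N(v)$), so iterating along the folding sequence produces $\chi(G) = \chi(K_{n+1}) = n+1$. For the lower bound, I would use the standard fact that an $n$-connected chordal graph contains $K_{n+1}$ as a subgraph: a simplicial elimination ordering yields a simplicial vertex whose closed neighbourhood is a clique of size at least $n+1$, since its degree is bounded below by $\kappa(G) \ge n$. This forces $\chi(G) \ge n+1$, and together the two bounds give the desired equality.

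I do not anticipate a real obstacle here, as both ingredients (the chromatic invariance of folds and the clique-size bound for connected chordal graphs) are elementary and are in fact already used implicitly in the proof of \Cref{thm:chordal}. The only very minor point to check is the degenerate case $n = 0$, where the hypothesis $\conn(\N(G)) < -1$ either is vacuous or, under the convention that the empty complex has connectivity $-1$, forces $G$ to be edgeless and the claim reduces to $\chi(G) = 1$.
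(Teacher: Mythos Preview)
Your proposal is correct and takes essentially the same approach as the paper: both hinge on \Cref{thm:chordal} applied with its parameter shifted down by one, together with the behaviour of the relevant invariant under folds. The only cosmetic difference is that the paper argues by contradiction (assuming $\chi(G)\ge n+2$ and splitting into cases via \Cref{fold}), whereas you take the contrapositive of \Cref{thm:chordal} directly and finish with the more elementary fact that folds preserve $\chi$; this is slightly cleaner but not materially different.
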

	\begin{proof}
	Using  \Cref{prop:cliquesize}, we conclude that  $\chi(G) \geq n+1$. Suppose $\chi(G) \geq n+2$. If $G$ is folded onto $K_{n+2}$, then from \Cref{fold}, $\N(G) \simeq \N(K_{n+2}) \simeq S^{n}$ and therefore $\conn(\N(G)) = n-1$. If $G $ is not folded onto $K_{n+2}$, then by Theorem \ref{thm:chordal} $\conn(\N(G)) \geq n-1$, which is a contradiction.
	\end{proof}

	\begin{thm}\cite[Theorem 9.19]{bondy} \label{thm:cut}
		Let $G$ be a connected chordal graph which is not complete, and let $S$ be a minimal vertex cut of $G$. Then $G[S]$ is a clique  of $G$. 
	\end{thm}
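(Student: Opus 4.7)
The plan is to fix two arbitrary distinct vertices $s_1, s_2 \in S$ and produce the edge $s_1 s_2$ by a cycle-plus-chord argument. Since $G - S$ is disconnected I pick two distinct components $C_1$ and $C_2$ of $G - S$, and aim to exhibit a cycle of length at least four in $G$ whose only allowable chord is $\{s_1, s_2\}$.

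The first step is the observation that every vertex $s \in S$ has at least one neighbour in every component of $G - S$: otherwise, if $s$ had no neighbour in some component $C$, then $S \setminus \{s\}$ would still separate $C$ from the remaining components, contradicting the minimality of $S$. Granted this, for each $i \in \{1, 2\}$ both $s_1$ and $s_2$ have neighbours in $C_i$, and since $C_i$ is connected I can choose a shortest path $P_i$ from $s_1$ to $s_2$ whose interior vertices all lie in $C_i$. Each such $P_i$ has length $k_i \geq 2$, and concatenating $P_1$ with the reverse of $P_2$ yields a cycle $C$ in $G$ of length $k_1 + k_2 \geq 4$.

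The core step is a case analysis of the chord of $C$ guaranteed by the chordal hypothesis. A chord joining two interior vertices of the same path $P_i$ would produce a strictly shorter $s_1 s_2$-path through $C_i$, contradicting the choice of $P_i$; a chord from an endpoint $s_j$ to a non-adjacent interior vertex of some $P_i$ would likewise truncate $P_i$ to a shorter path and contradict its minimality; and a chord joining an interior vertex of $P_1$ with an interior vertex of $P_2$ is impossible because such vertices lie in distinct components of $G - S$, so no edge of $G$ can connect them. The only surviving possibility is the chord $s_1 s_2$, which shows $s_1 \sim s_2$; since $s_1, s_2 \in S$ were arbitrary, $G[S]$ is a clique.

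The main obstacle I anticipate is bookkeeping in the chord case-analysis, especially ensuring that no chord crosses between the two ``sides'' of the cycle, which uses precisely that $S$ is a cut. The other subtle point is the preliminary claim that every vertex of $S$ has a neighbour in each component of $G - S$; this is where the minimality hypothesis on $S$ enters the argument, and once it is in place the rest of the proof is routine.
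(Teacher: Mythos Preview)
The paper does not supply its own proof of this statement; it is quoted as Theorem~9.19 from Bondy--Murty and used as a black box. Your argument is correct and is precisely the classical proof (originally due to Dirac, and the one given in \cite{bondy}): use minimality of $S$ to find neighbours of $s_1,s_2$ in two distinct components of $G-S$, join them by shortest paths inside each component to form a cycle of length at least four, and eliminate all chord candidates except $s_1 s_2$ using the shortest-path choice and the fact that the two components are non-adjacent in $G-S$. One small point of presentation: it is cleanest to begin by assuming $s_1 \not\sim s_2$ for a contradiction, which is what justifies your claim that each $P_i$ has length $k_i \geq 2$; otherwise the edge $s_1 s_2$ itself could be the shortest $s_1 s_2$-path and the cycle construction would degenerate.
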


	We now as a consequence of Theorem \ref{thm:cutiscomplete} prove the following.

	\begin{thm}\label{thm:converse}
		Let $G$ be a chordal graph. If $G$ is stiff and vertex connectivity of $G$ is $n$,  then $\conn(\N(G)) < n$.
		
	\end{thm}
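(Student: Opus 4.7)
The plan is to apply \Cref{thm:cutiscomplete} to a decomposition of $G$ along a minimum vertex cut.

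If $G$ is complete, then $G\cong K_{n+1}$, so $\N(G)\simeq\bS^{n-1}$ and $\conn(\N(G))=n-2<n$. Assume henceforth $G$ is non-complete, and let $S$ be a minimum vertex cut of $G$; by \Cref{thm:cut}, $G[S]\cong K_n$. Group the components of $G-S$ into two non-empty classes, giving a partition $V(G)\setminus S=A_1\sqcup A_2$ with no edges between $A_1$ and $A_2$, and set $G_i=G[A_i\cup S]$. Then $G=G_1\cup G_2$ and $G_1\cap G_2=G[S]\cong K_n$.

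I would then exhibit $a\in A_1$ and $b\in A_2$ each adjacent to every vertex of $S$. This is a classical structural fact about chordal graphs: each $S$-component of a minimal clique separator contains a vertex adjacent to the entire separator, which follows from the clique-tree representation of chordal graphs (equivalently, $S$ is properly contained in a maximal clique inside each $S$-component).

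It remains to verify that $\N(G_1)$ and $\N(G_2)$ are $(n-1)$-connected, which enables \Cref{thm:cutiscomplete}. Each $G_i$ is itself an $n$-connected chordal graph: a hypothetical cut $T\subsetneq V(G_i)$ with $|T|<n$ would leave some piece $C\subseteq A_i$ disconnected from $S\setminus T$ inside $G_i-T$, and since $A_i$ has no edges to $A_{3-i}$ in $G$, the set $C$ is also isolated in $G-T$, contradicting $\kappa(G)=n$. Then \Cref{thm:chordal} (applied with parameter $n-1$) yields $(n-1)$-connectivity of $\N(G_i)$, provided $G_i$ is not folded onto a clique of size $n+1$. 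Combined with the vertices $a, b$ found above, \Cref{thm:cutiscomplete} then gives $\conn(\N(G))=n-1<n$, as desired.

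The main obstacle is ruling out the sub-case in which some $G_i$ is folded onto $K_{n+1}$. The key leverage is stiffness of $G$: the first fold $u\to v$ in $G_i$'s fold sequence cannot have $u\in A_i$, for otherwise $N_G(u)=N_{G_i}(u)\subseteq N_{G_i}(v)\subseteq N_G(v)$ would be a fold of $G$ itself; and if $v\in A_i$ with $N_{G_i}(u)=N_{G_i}(v)$, the reverse fold $v\to u$ is similarly a fold of $G$, again contradicting stiffness. Iterating this analysis through each stage of the fold sequence --- tracking how previously removed $S$-vertices alter subsequent neighbourhoods, and using that every $s\in S$ has a neighbour in $A_{3-i}$ by minimality of $S$ --- should force a configuration incompatible with $G$ being stiff, thereby ruling out the sub-case.
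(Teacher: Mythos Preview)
Your overall strategy matches the paper's: decompose $G$ along a minimum vertex cut $S\cong K_n$, locate $a\in A_1,\ b\in A_2$ adjacent to all of $S$, verify that each $G_i$ is $n$-connected chordal, apply \Cref{thm:chordal} to get $\conn(\N(G_i))\ge n-1$, and finish with \Cref{thm:cutiscomplete}. The steps up through the decomposition and the $n$-connectivity of the $G_i$ are fine.

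The genuine gap is your handling of the sub-case ``$G_i$ is folded onto $K_{n+1}$''. Your plan is to take an \emph{arbitrary} fold sequence of $G_i$ and lift it to a fold of $G$. The first step already breaks down: you correctly note that if the first fold $u\to v$ has $u\in A_i$ then it lifts to $G$ and you are done; but then you must treat the case $u\in S$. There you only address the very special situation $v\in A_i$ with $N_{G_i}(u)=N_{G_i}(v)$, and for the remaining cases ($u,v\in S$, or $u\in S,\ v\in A_i$ with strict containment) you offer only the phrase ``should force a configuration incompatible with $G$ being stiff''. This is not an argument, and in fact iterating is problematic: once a vertex of $S$ has been removed from $G_i$, a subsequent fold $u'\to v'$ with $u'\in A_i$ need \emph{not} satisfy $N_G(u')\subseteq N_G(v')$, since $u'$ may have been adjacent in $G$ to the removed $S$-vertex while $v'$ was not.

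The fix is that you do not need to lift a given fold sequence at all; you only need to produce \emph{one} foldable vertex of $G_i$ lying in $A_i$, since that single fold already contradicts stiffness of $G$. If $G_i$ folds onto $K_{n+1}$ then every maximal clique of $G_i$ has size exactly $n+1$. Take a simplicial decomposition $(V_1,\ldots,V_k)$ of $G_i$ with $V_1\supseteq S$ (possible via \Cref{prop:cliquesize}, since $S\cup\{a\}$ is an $(n+1)$-clique). If $k=1$ then $G_i=K_{n+1}$, so $A_i=\{a\}$ and $N_G(a)=S\subseteq N_G(b)$ is a fold of $G$. If $k\ge 2$, then $V_k\cap\bigl(\bigcup_{j<k}V_j\bigr)$ has size exactly $n$, so $V_k\setminus\bigcup_{j<k}V_j=\{w\}$ with $w\notin V_1\supseteq S$, hence $w\in A_i$, and $N_{G_i}(w)=V_k\setminus\{w\}$ is contained in some $V_{j_0}$ with $j_0<k$; any $v'\in V_{j_0}\setminus V_k$ then satisfies $N_G(w)=N_{G_i}(w)\subseteq N_G(v')$, a fold of $G$. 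This is essentially what the paper does (tersely), preceded by the reduction $\chi(G)\ge n+3$ via \Cref{lovasz}, which lets one of the two pieces be handled for free because it already contains a clique of size $n+3$.
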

	\begin{proof}
		If $\chi(G) \leq n+2$, then by \Cref{lovasz}, $\conn(\N(G)) < n$. Let $\chi(G) \geq n+3$. Since $G$ is $n$-connected, from Theorem \ref{thm:chordal}, $\N(G)$ is $(n-1)$-connected. Let $S = \{x_1,  \ldots, x_n\}$ be a minimal vertex cut of $G$. From \Cref{thm:cut}, $G[S] \cong K_n$. Let $X_1, X_2, \ldots, X_r$ be the  $S$-components of $G$. Clearly, for each $1 \leq i \leq r, X_i$ is a $n$-connected chordal graph.

	Let $G_1= X_1$ and $G_2 = \cup_{i=2}^r X_i$. Observe that $G_1$ and $G_2$ are at least $n$-connected. 	From Proposition \ref{prop:cliquesize}, we conclude that there exist $a_i \in V(G_i)$ such that
		$a_i \sim x_j$ for all $1 \leq i \leq 2$ and $ 1 \leq j \leq n$.

		Since $G$ contain a clique of size $n+3$, either $G_1$ or $G_2$ contain a clique of size $n+3$. Suppose,  $G_1$ contains a clique of size $n+3$. From \Cref{thm:chordal}, $\N(G_1)$ is $(n-1)$-connected. If  all the maximal cliques of $G_2$ are of size $n+1$, then using \Cref{prop:cliquesize}, we conclude that $G_2$ is folded onto $K_{n+1}$.  Hence, $G_2$ can be folded onto $G_1$, which contradicts the fact that $G$ is stiff. 
		From Theorem \ref{thm:chordal}, we have thus  $\N(G_2)$ is  $(n-1)$-connected.  Using  Theorem \ref{thm:cutiscomplete}, we see that $\conn(\N(G)) = n-1$.
		By similar argument, if $G_2$ contains a  clique of size $n+3$, then  we can show that 
		$\conn(\N(G)) = n-1$.

	\end{proof}
	
	Combining Theorems \ref{thm:chordal} and \ref{thm:converse} we get our main result of this section.
	\begin{thm} \label{thm:chordalcomplete} Let $G$ be a non-complete stiff chordal graph. Then the vertex connectivity  $\kappa(G)= n+1$ if and only if $\conn(\N(G)) = n$.
	\end{thm}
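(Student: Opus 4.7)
The plan is to deduce the theorem directly by applying Theorem \ref{thm:chordal} and Theorem \ref{thm:converse} in both directions, with the stiffness hypothesis used to rule out the folding obstruction in Theorem \ref{thm:chordal}.

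For the forward implication, suppose $\kappa(G) = n+1$. Applying Theorem \ref{thm:converse} with vertex connectivity $n+1$ immediately gives $\conn(\N(G)) < n+1$, so $\conn(\N(G)) \leq n$. For the matching lower bound, I would invoke Theorem \ref{thm:chordal}: $G$ is $(n+1)$-connected and chordal, so it suffices to verify that $G$ is not folded onto $K_{n+2}$. Here the stiffness of $G$ is crucial: a stiff graph admits no proper folds, so the only graph $G$ is folded onto is $G$ itself. Since $G$ is assumed non-complete, $G$ is not isomorphic to $K_{n+2}$, and hence is not folded onto $K_{n+2}$. Theorem \ref{thm:chordal} then yields $\conn(\N(G)) \geq n$, and combined with the previous inequality we conclude $\conn(\N(G)) = n$.

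For the reverse implication, suppose $\conn(\N(G)) = n$ and set $m := \kappa(G)$. Applying Theorem \ref{thm:converse} with vertex connectivity $m$ gives $\conn(\N(G)) < m$, hence $n < m$, so $m \geq n+1$. Now I would apply Theorem \ref{thm:chordal} with parameter $m-1$ in place of $n$: the graph $G$ is $m$-connected and, by the same stiff-plus-non-complete argument as above, not folded onto $K_{m+1}$. Therefore $\N(G)$ is $(m-1)$-connected, giving $\conn(\N(G)) \geq m-1$, i.e., $n \geq m-1$, so $m \leq n+1$. Together with $m \geq n+1$ this forces $\kappa(G) = m = n+1$.

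There is no real obstacle beyond assembling the two theorems; the only subtle point to state clearly is that stiffness combined with non-completeness rules out $G$ being folded onto $K_{n+2}$ (respectively $K_{m+1}$), which is exactly the hypothesis required for Theorem \ref{thm:chordal} to apply in each direction.
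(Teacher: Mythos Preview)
Your proposal is correct and follows essentially the same approach as the paper's proof, combining Theorem~\ref{thm:chordal} and Theorem~\ref{thm:converse} to bound $\conn(\N(G))$ from both sides. You spell out more explicitly than the paper why stiffness together with non-completeness guarantees that $G$ is not folded onto the relevant clique, which is precisely the point one needs to invoke Theorem~\ref{thm:chordal}.
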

	\begin{proof}
		If $\kappa(G) = n+1$, then Theorems \ref{thm:chordal} and \ref{thm:converse} imply that $\conn(\N(G)) = n$. Now, let  $\conn(\N(G)) = n$.  From  \Cref{thm:converse}, $\kappa(G) \geq n+1$. But, if $\kappa(G) \geq n+2$, then \Cref{thm:chordal} implies that $\N(G)$ is $(n+1)$-connected, which is a contradiction. Thus $\kappa(G)=n+1$. 
	\end{proof}
	
	
		
	\begin{rmk}
	In \cite{csorba2}, Csorba proved that the box complexes of chordal graphs are homotopy equivalent to wedge of spheres. It is well known that box complex and neighbourhood complex are homotopy equivalent \cite{csorba3}. So, the neighbourhood complexes of chordal graphs are homotopy equivalent to wedge of spheres. 
	In his proof, Csorba used the simplicial decomposition of chordal graphs (see \Cref{thm:simplicaildecomposition}). He also remarked that using the simplicial decomposition structure, one can tell the possible dimensions of spheres appearing in the wedge. In fact, by following his proof and by using  \Cref{prop:cliquesize}, we can conclude that if $G$ is a $(n+1)$-connected, non-complete, stiff  chordal graph, then $\conn(\N(G)) \geq n$.
	\end{rmk}

	
	\section{Concluding Remarks}\label{sec:conclusion}
	
In the previous section, we showed that the vertex connectivity of non-complete stiff chordal graphs is exactly one more than the connectivity of its neighbourhood complex. In the case of queen graphs, the vertex connectivity of the graph can be much larger than the connectivity of its neighbourhood complex, as shown in the following table.  	 
		\begin{table}[H]
	\centering
\scalebox{0.58}{
	\begin{tabular}{|c|c|c|c|c|c|c|c|c|c|c|c|c|c|c|c|c|c|c|c|c|c|}
		\hline 
	(m, n)&$(2,2)$& $(2,3)$& $(2,4)$& $(2,5)$& $(2,6)$& $(2,7)$& $(2,8)$&$(2,9)$& $(2,10)$&$(3,3)$ & $(3,4)$& $(3,5)$& $(3,6)$ & $(3,7)$&$(3,8)$& $(4,2)$& $(4,4)$& $(4,5)$& $(4,6)$\\ [10pt]
	\hline
		$\kappa(Q_{m, n})$&$3$& $4$& $5$& $6$& $7$& $8$& $9$&$10$& $11$&$6$ & $7$& $8$& $9$ & $10$&$11$& $5$&  $9$& $10$& $11$\\ [5pt]
	\hline
    conn($N(Q_{m,n}$)&$1$ &$1$&$1$ &$2$&$2$&$2$&$2$&$2$&$2$&$2$&$2$&$2$&$2$&$2$&$2$&$1$&$2$&$2$&$2$ \\
    \hline
\end{tabular}}
\caption{\small{Vertex connectivity vs Connectivity of Neighbourhood Complex of $Q_{m, n}$}}
\label{table:1}
\end{table}	
 From \Cref{thm:weaklytriangulated},  for a class of  weakly triangulated graphs, $\kappa(G) > \text{Conn}(\N(G))$.  For any graph $G$ of chromatic number less or equal than three, clearly  from Theorem \ref{lovasz}, $\kappa(G) > \text{Conn}(\N(G))$. We can create more classes of graphs with vertex connectivity much larger than the connectivity of its neighbourhood complex by using the notion of Mycielskian of a graph.
	The Mycielskian $\sM(G)$ of $G$ with $V(G) = \{v_1, \ldots, v_n\}$ is a graph with 
	$V(\sM(G)) = \{v_1, \ldots, v_n\} \sqcup \{u_1, \ldots, u_n\} \sqcup \{w\}$ and 
	$E(\sM(G))  = E(G) \sqcup \{(u_i, v_j), (u_j, v_i) | (v_i, v_j) \in E(G)\} \sqcup \{(w, u_j )| 1 \leq j \leq n\}. $

	 In \cite{csorba1}, Csorba proved that the neighbourhood complex $\N(\sM(G))$ is homotopy equivalent to the suspension of $\N(G)$. In \cite{chang}, Chang et al. shown that  $\kappa(\sM(G)) > \kappa(G)$. Hence,  using the results of Csorba and Chang et al., we have  $\kappa(G) > \text{Conn}(\N(G))$ implies that $\kappa(\sM(G)) > \text{Conn}(\N(\sM(G)))$. From 
	 \Cref{thm:chordalcomplete}, for non-complete stiff chordal graph $G$, $\kappa(G) > \text{Conn}(\N(G))$. So, for all of these classes of graphs, vertex connectivity is larger than the connectivity of its neighbourhood complex.

     For $n \geq 5$,  let $\tilde{K}_n$ be the graph defined as a complete graph $K_n$ with one hanging edge (see \Cref{Fig:a}), then clearly $\tilde{K}_n$ is folded onto $K_n$ and its vertex connectivity is $1$. But $\N(\tilde{K}_n) \simeq \N(K_n) \simeq S^{n-2}$ and therefore Conn$(\N(\tilde{K}_n)) = n-3$. More generally, we construct  a class of stiff graphs, where the vertex connectivity is arbitrarily smaller than the topological connectivity of its neighbourhood complex.

\begin{figure} [H]
	\begin{subfigure}[]{0.4\textwidth}
				\vspace{0.5 cm}
		\centering
		\vspace{0.2 cm}
		\begin{tikzpicture}
		[scale=0.35, vertices/.style={draw, fill=black, circle, inner sep=0.95pt, minimum size = 0pt, }]
		
		\node[vertices, label=left:{}] (1) at (0,0) {};
		\node[vertices, label=below:{}] (2) at (4,0) {};
		\node[vertices, label=below:{}] (3) at (0, 3.5) {};
		\node[vertices, label=above:{}] (4) at (4, 3.5) {};
		
		\node[vertices, label=left:{}] (5) at (2, 5.5) {};
			\node[vertices, label=left:{}] (6) at (4, 5.5) {};

		\foreach \to/\from in
		{1/2, 1/3, 1/4, 1/5, 2/3, 2/4, 2/5, 3/4, 3/5, 4/5, 5/6} \draw [-] (\to)--(\from);
		%

		\end{tikzpicture}
				\vspace{0.2 cm}
		\caption{$\tilde{K}_5$} \label{Fig:a}
	\end{subfigure}
	\begin{subfigure}[]{0.6\textwidth}
		\centering
		\begin{tikzpicture}
		[scale=0.4, vertices/.style={draw, fill=black, circle, inner
			sep=0.95pt}]
		\node[vertices, label=below:{9}] (9) at (0,0) {};
		\node[vertices, label=below:{10}] (10) at (2, 0) {};
		\node[vertices,label=right:{1}] (1) at (3.5,1) {};
		\node[vertices,label=right:{2}] (2) at (4.2,2.8) {};
		\node[vertices,label=right:{3}] (3) at (3.5,4.5) {};
		\node[vertices,label=above:{4}] (4) at (2,5.5) {};
		\node[vertices,label=above:{5}] (5) at (0,5.5) {};
		\node[vertices,label=left:{6}] (6) at (-1.5,4.5) {};
			\node[vertices,label=left:{7}] (7) at (-2.2,2.8) {};
				\node[vertices,label=left:{8}] (8) at (-1.5,1) {};
				
					\node[vertices,label=below:{11}] (11) at (6,2.8) {};
						\node[vertices,label=below:{12}] (12) at (8,1.2) {};
				\node[vertices,label=below:{13}] (13) at (10.5,1.2) {};
				\node[vertices,label=above:{14}] (14) at (10.5,4.3) {};
					\node[vertices,label=above:{15}] (15) at (8,4.3) {};

		\foreach \to/\from in
		{9/10, 10/1, 9/8, 1/2, 7/8, 2/3, 6/7, 3/4, 4/5, 5/6, 11/12, 11/13, 11/14, 11/15, 12/13, 12/14, 12/15, 13/14, 13/15, 14/15, 11/1, 11/3, 1/4, 4/7, 7/10, 10/1, 2/5, 5/8, 8/1, 10/3, 3/6, 6/9, 9/2} \draw [-] (\to)--(\from);
		
		\end{tikzpicture}
		\caption{$G_{1, 5}$} \label{Figure:b}
	\end{subfigure}

	\caption{Examples of the graphs $\tilde{K}_n$ and $G_{r, p}$} \label{ex}
\end{figure}

	Let $n \geq 2$ be a positive integer and $S \subset \{ 1, 2, \ldots, n-1\}$. 
	The {\it circulant graph} $C_n(S)$ is the graph, whose set of vertices
	$V(C_n(S)) =  \{1, 2, \ldots, n\}$  and any two vertices $x$ and $y$
	adjacent if and only if $x-y \ (\text{mod \ n}) \in S \cup -S$, where $-S = \{n-a  :  a \in S\}$. 	Observe  that 
for any $t \in \{1, 2, \ldots, n\}$, $N_{C_n(S)}(t) = \{s+t : s \in S\} \cup \{ n-s+t : s \in S \}$.
	
Let $r \geq 1, p \geq 3, n = 4r+6$ and $S = \{1, 3, \ldots, 2r+ 1\}$. Let $H_n^p$ be the  complete graph on vertex set 
$\{n+1, \ldots, n+p\}$. Let $G_{r,p}$ be the  graph  on vertex set $\{1, 2, \ldots, n+p\}$ and 
	$E(G_{r,p}) = E(C_n(S)) \cup E(H_n^p) \cup \{( n+1, 1), (n+1, 3)\}$ (see \Cref{Figure:b}).  For a set $A$, let $\Delta^{A}$ denote a simplex on vertex set $A$ and $\partial(\Delta^{A})$, the simplicial boundary of $\Delta^{A}$.  It is easy to check that 
$\N(C_n(S)) = \partial(\Delta^{\{1, 3, 5, \ldots, n-1\}}) \sqcup \partial(\Delta^{\{2, 4,  6, \ldots, n\}})$. Hence $\N(C_n(S)) \simeq S^{2r+1} \sqcup S^{2r+1} $. Since $\N(H_n^p) \simeq S^{p-2}$,  by \Cref{claim:example}, we conclude that $\N(G) \simeq S^{2r+1} \vee S^{2r+1} \vee S^{p-2}$. By construction, the vertex connectivity of $G_{r,p}$ is $1$ whereas the connectivity of its neighbourhood complex can be made very large by making appropriate choices of $r$ and $p$.

	\subsection*{Acknowledgement}
	
	The second author would like to thank Niranjan Balachandran for his insights about chordal graphs. 

	\appendix
		\section{Appendix}

	\begin{thm}\cite[Theorem 9.20]{bondy} \label{thm:simplicaildecomposition} Let $G$ be a  chordal graph and let $V_1$ be a maximal clique of $G$. Then   the maximal cliques of $G$ can be arranged in a sequence  $(V_1, \ldots, V_k)$ such that   $V_j \cap (\bigcup\limits_{i=1}^{j-1} V_i)$ is a clique of $2 \leq j \leq k$.  Such a sequence $(V_1, \ldots, V_k)$ is called a simplicial decomposition of $G$.
	\end{thm}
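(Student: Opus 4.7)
The plan is to deduce the statement from the \emph{clique tree} representation of chordal graphs. A clique tree of $G$ is a tree $T$ whose nodes are the maximal cliques of $G$ such that, for every vertex $v\in V(G)$, the set of nodes containing $v$ spans a connected subtree of $T$. Equivalently, $T$ satisfies the \emph{running intersection property}: for any two maximal cliques $C,C'$ and any maximal clique $C''$ on the unique $CC'$-path in $T$, one has $C\cap C'\subseteq C''$.

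First I would establish the existence of such a clique tree by induction on $|V(G)|$, using \Cref{thm:simplicial} to produce a simplicial vertex $v$. Let $C_v$ denote the unique maximal clique of $G$ containing $v$. By induction $G-\{v\}$ admits a clique tree $T'$. If $C_v\setminus\{v\}$ is already a maximal clique of $G-\{v\}$, I would relabel the corresponding node of $T'$ as $C_v$; otherwise $C_v\setminus\{v\}$ is strictly contained in a unique maximal clique $C'$ of $G-\{v\}$, and I would attach $C_v$ as a new leaf adjacent to $C'$. In either case $v$ belongs to exactly one node of the new tree, so the subtree property carries over from $T'$ and we obtain a clique tree $T$ of $G$.

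Given such a tree $T$, I would root it at the prescribed clique $V_1$ and list the remaining maximal cliques $V_2,\dots,V_k$ by any traversal in which each node appears after its parent (e.g.\ BFS or preorder DFS); denote the parent of $V_j$ by $V_{p(j)}$. For $j\ge 2$ and any $i<j$, the node $V_{p(j)}$ lies on the unique $V_iV_j$-path in $T$, so the running intersection property yields $V_j\cap V_i\subseteq V_{p(j)}$. Taking the union over $i<j$,
\[
V_j\cap\Bigl(\bigcup_{i=1}^{j-1}V_i\Bigr) \;=\; V_j\cap V_{p(j)},
\]
which is the intersection of two cliques and hence itself a clique.

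The main obstacle is the clique-tree construction, specifically verifying that the subtree property is preserved in the inductive step; this forces one to distinguish carefully between the case where $N_G(v)\cup\{v\}$ is already visible as a maximal clique of $G-\{v\}$ and the case where it is subsumed by a larger one. Once this is settled, the rooting and the application of running intersection are essentially formal, and the resulting ordering $(V_1,\dots,V_k)$ is the desired simplicial decomposition starting at $V_1$.
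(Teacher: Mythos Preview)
The paper does not give its own proof of this statement: it is quoted verbatim as Theorem~9.20 of Bondy--Murty and used as a black box (the paper only proves the refinement in \Cref{prop:cliquesize}). So there is nothing to compare against directly.

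Your argument via clique trees is sound and yields the stated conclusion, including the freedom to start the sequence at any prescribed $V_1$. One small inaccuracy: in the inductive construction you write that when $C_v\setminus\{v\}$ is not maximal in $G-\{v\}$ it is contained in a \emph{unique} maximal clique $C'$. This need not be true (e.g.\ $N_G(v)=\{a,b\}$ with $\{a,b,c\}$ and $\{a,b,d\}$ both maximal and $c\not\sim d$). It does not matter for your proof: attaching $C_v$ as a leaf adjacent to \emph{any} maximal clique of $G-\{v\}$ containing $N_G(v)$ preserves the subtree property, since every $w\in C_v\setminus\{v\}$ then lies in both $C_v$ and its new neighbour. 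You should drop the word ``unique'' and say ``some''.

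For context, the textbook proof (and the argument the paper gives for the sharper \Cref{prop:cliquesize}) proceeds differently: one takes a minimal vertex cut $S$, uses that $G[S]$ is a clique (\Cref{thm:cut}), applies induction to each $S$-component starting from a maximal clique containing $S$, and concatenates the resulting sequences. Your clique-tree route is slightly more structural and makes the ``start at $V_1$'' clause immediate by rooting, whereas the cut-based proof needs a moment's care to arrange that $V_1$ lies in the first $S$-component; both are standard and of comparable length.
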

	
	The following statement is probably well known to experts. We are proving it here for completeness. 
	\begin{prop} \label{prop:cliquesize}Let $G$ be an $n$-connected chordal graph  and let $V_1$ be a maximal clique of $G$. Then either $G = V_1$  or the maximal cliques of $G$ can be arranged in a sequence  $(V_1, \ldots, V_k)$ such that   $V_j \cap (\bigcup\limits_{i=1}^{j-1} V_i)$ is a clique of size at least $n $, $2 \leq j \leq k$. 
	\end{prop}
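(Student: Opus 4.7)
The plan is to apply \Cref{thm:simplicaildecomposition} to produce a simplicial decomposition $(V_1, V_2, \ldots, V_k)$ beginning with the given clique $V_1$ and then to strengthen the conclusion by exploiting the $n$-connectedness of $G$. Writing $S_j := V_j \cap \bigl(\bigcup_{i<j} V_i\bigr)$, the cited theorem already guarantees that $S_j$ is a clique, so the only thing left to verify is $|S_j| \ge n$ for each $2 \le j \le k$. I would prove this by showing that $S_j$ is a vertex cut of $G$, whence $|S_j| \ge \kappa(G) \ge n$.

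To set this up, I would first record the running intersection property: since $S_j$ is a clique of $G$ contained in $\bigcup_{i<j} V_i$ and $G$ is chordal, there exists $p(j) \in \{1, \ldots, j-1\}$ with $S_j \subseteq V_{p(j)}$. Define the graph $T$ on the node set $\{V_1, \ldots, V_k\}$ whose edges are $\{V_j, V_{p(j)}\}$ for $j \ge 2$; it is connected with $k-1$ edges, hence a tree. I would then verify that $T$ is a \emph{clique tree}: for each vertex $v \in V(G)$, the set of nodes containing $v$ induces a subtree of $T$. This follows by induction on $j$: if $v \in V_j$ and $v \in V_l$ for some $l < j$, then $v \in S_j \subseteq V_{p(j)}$, so $V_{p(j)}$ also contains $v$, allowing one to connect $V_j$ through $V_{p(j)}$ to $V_l$ within nodes containing $v$.

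The key step is the clique-tree separator lemma: for any edge $\{V_i, V_j\}$ of $T$, the intersection $V_i \cap V_j$ is a vertex cut of $G$. Deleting this edge of $T$ partitions its nodes into two subtrees $T_A \ni V_i$ and $T_B \ni V_j$; put $A := \bigl(\bigcup_{V \in T_A} V\bigr) \setminus (V_i \cap V_j)$ and $B := \bigl(\bigcup_{V \in T_B} V\bigr) \setminus (V_i \cap V_j)$. Maximality of $V_i$ and $V_j$ forces $V_i \not\subseteq V_j$ and vice versa, so $A$ and $B$ are both non-empty. If an edge $\{a,b\}$ of $G$ had $a \in A$ and $b \in B$, it would lie in some maximal clique $V_l$; applying the clique-tree property to $a$ (which belongs to a clique in $T_A$ and to $V_l$) forces $V_l \in T_A$, while applying it to $b$ forces $V_l \in T_B$, a contradiction. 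Hence $V_i \cap V_j$ separates $A$ from $B$ in $G$ and is a vertex cut.

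Finally, observe that $S_j = V_j \cap V_{p(j)}$: the inclusion $S_j \subseteq V_j \cap V_{p(j)}$ is by the choice of $p(j)$, while $V_j \cap V_{p(j)} \subseteq V_j \cap \bigl(\bigcup_{i<j} V_i\bigr) = S_j$. Applying the separator lemma to the edge $\{V_j, V_{p(j)}\}$ of $T$ yields $|S_j| = |V_j \cap V_{p(j)}| \ge n$, completing the proof. The main obstacle is the clique-tree separator lemma; once one has the right combinatorial setup the argument reduces to tracking paths through the tree, but the bookkeeping is the longest part of the proof.
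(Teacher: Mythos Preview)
Your approach via clique trees is genuinely different from the paper's. The paper argues by induction on $|V(G)|$: it takes a minimal vertex cut $S$ (so $|S|\ge n$ by $n$-connectedness and $G[S]$ is complete by \Cref{thm:cut}), applies the induction hypothesis to each $S$-component to get a good simplicial decomposition of it, and then concatenates these decompositions, starting each later component at a maximal clique containing $S$. Your route through the clique-tree separator lemma is a natural alternative, and once set up correctly it is arguably cleaner; but the way you invoke the running intersection property has a real gap.

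You assert that for the simplicial decomposition $(V_1,\dots,V_k)$ produced by \Cref{thm:simplicaildecomposition}, each $S_j = V_j \cap \bigl(\bigcup_{i<j}V_i\bigr)$ lies inside some single earlier $V_{p(j)}$, citing only ``$S_j$ is a clique and $G$ is chordal''. This does not follow. Take $G$ on $\{1,\dots,5\}$ with maximal cliques $\{1,2,3\}$, $\{2,3,4\}$, $\{3,4,5\}$ (chordal, with $\kappa(G)=2$); the ordering $(\{1,2,3\},\{3,4,5\},\{2,3,4\})$ is a valid simplicial decomposition in the sense of \Cref{thm:simplicaildecomposition}, yet $S_3=\{2,3,4\}$ is contained in neither $\{1,2,3\}$ nor $\{3,4,5\}$. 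Worse, in this ordering $|S_2|=1<2=\kappa(G)$, so an arbitrary output of \Cref{thm:simplicaildecomposition} need not satisfy the conclusion of the proposition at all---you cannot fix whichever decomposition the theorem hands you and then bound its $|S_j|$ after the fact. The repair is to bypass \Cref{thm:simplicaildecomposition} and instead invoke (or prove) directly that a chordal graph admits a clique tree; rooting it at $V_1$ and reading off the cliques by a tree traversal yields an ordering that \emph{does} satisfy the running intersection property by construction, after which your separator argument goes through verbatim.
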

	
	\begin{proof} Assume that $G \neq V_1$, {\it i.e.,} $G$ is non-complete. Proof is by induction on number of vertices of $G$. Let $S$ be a minimal vertex cut of $G$ and let $X_1, \ldots, X_N$ are the $S$-components of $G$. Since $G$ is $n$-connected, $|S| \geq n$. Clearly, each $X_i$ is an $n$-connected chordal graph. Without loss of generality we assume that $V_1$ is a maximal clique of $X_1$. From \Cref{thm:cut}, $G[S]$ is a clique.  Let $C^{i}$ be a maximal clique of $X_i$ containing $S$, $1 \leq i \leq N$. Clearly, each maximal clique of $X_i$ is also a maximal clique of $G$. By induction, for each $1 \leq i \leq N$,  either $X_i$ is a complete graph or we can arrange the maximal cliques of $X_i$ by $ M_i = (T_{j_1}^{i}, \ldots, T^{i}_{J_{l_i}})$ such that $T^{i}_{j_t} \cap (\bigcup\limits_{m=1}^{t-1} T^i_{j_m}), 2 \leq t \leq l_i $ is a clique of size at least $n$, where $T^{1}_{j_1} = V_1$ and $T^{i}_{j_1} = C^{i}$ for $2 \leq i \leq N$. 
			Let  $M = (T_1, \ldots, T_{l_1}, T_{l_1+1}, \ldots, T_{l_1+l_2}, \ldots, T_{l_1 + \cdots + l_{N-1}+1}, \ldots,  T_{l_1 + \cdots +l_N})$, where $M_1= (T_1, \ldots, T_{l_1})$ and $M_i = (T_{l_1 + \cdots+ l_{i-1}+1}, \ldots, T_{l_1+ \cdots+ l_{i}})$ for $2 \leq i \leq N$.  Let $2 \leq t \leq l_1 + \ldots + l_N$. If $t \leq l_1$, then since $M_1$ is a simplicial decomposition of $X_1$, $ T_t \cap (\bigcup\limits_{i=1}^{t-1} T_i)$ will be a clique of size at least $n$. 
		So assume $t \geq l_1+1$. There exists $1 \leq p \leq N-1$, such that $l_1 + \ldots  + l_{p}+1 \leq t\leq  l_1 + \ldots + l_{p}+ l_{p+1} $. If $t= l_1 + \ldots + l_{p}+1$, then  $V(T_t \cap (\bigcup\limits_{i=1}^{t-1} T_i)) = S$ and therefore $T_t \cap (\bigcup\limits_{i=1}^{t-1} T_i)$ is a clique of size at least $n$ by Theorem \ref{thm:cut}. Let $t > l_1 + \ldots + l_{p}+1$. Observe that the vertices of $T_t \cap (\bigcup\limits_{i=1}^{t-1} T_i)$ is a subset of vertices of $X_{p+1}$. Hence, $ T_t \cap (\bigcup\limits_{i=1}^{t-1} T_i) =  T_t \cap (\bigcup\limits_{i = l_1 + \ldots  + l_{p}+1}^{t-1} T_i) $. Since $M_{p+1}$ is a simplicial decomposition of $X_{p+1}$, by induction $ T_t \cap (\bigcup\limits_{i = l_1 + \ldots  + l_{p}+1}^{t-1} T_i)$ is a clique of size at least $n$. Hence, $M$ is a simplicial decomposition of $G$ such that $T_t \cap (\bigcup\limits_{i=1}^{t-1} T_i)$ is a clique of size at least $n$, $2 \leq t \leq l_1 + \ldots + l_N$.
	\end{proof}
	
			Let  $X$ be a simplicial complex and $\tau, \sigma \in X$ such that
	$\sigma \subsetneq \tau$ and  $\tau$ is the only maximal simplex in $X$ that contains $\sigma$.
	A  {\it simplicial collapse} of $X$ is the simplicial complex $Y$ obtained from $X$ by
	removing all those simplices $\gamma$  of $X$ such that
	$\sigma \subseteq \gamma \subseteq \tau$. Here, $\sigma$ is called a {\it free face} of
	$\tau$ and $(\sigma, \tau)$ is called a {\it collapsible pair}. It is well known that, if 
	$X$ collapses to $Y$, then $X \simeq Y$ \cite[Proposition 6.14]{dk}.

	Recall that for a simplicial complex $\K$, the set of maximal simplices of $\K$ denoted by 
	$\M(\K)$.
	\begin{claim} \label{claim:example}
Let $r \geq 1, p \geq 3, n = 4r+6$ and $S = \{1, 3, \ldots, 2r+ 1\}$. Let  the graphs $G_{r, p}$ and $H_n^p$ be as defined in 	\Cref{sec:conclusion}. Then 	$\N(G_{r, p})$ collapses to a subcomplex $X$, where 
	$$ \M(X) = \M(\N(C_n(S))) \sqcup \M(\N(H_n^p)) \sqcup \{\{3, n+p\} \}\sqcup \{\{n, n+1\}\}.$$
	
		\end{claim}
	
	\begin{proof} For convenience of notation, we denote the graph  $G_{r, p}$ by $G$ and the graph $H_n^p$ by $H$. Clearly, 
	$N_G(1) = N_{C_n(S)}(1) \cup \{n+1\}, N_G(3) = N_{C_n(S)}(3) \cup \{n+1\}, N_G(n+1) = \{n+2, \ldots, n+p\} \cup \{1, 3\}$. For any $i \in [n] \setminus \{1, 3\}$, 
	$N_G(i) = N_{C_n(S)}(i)$ and for any $j \in \{n+2, \ldots, n+p\}$, $N_G(j) = N_H(j)$.
	
		Observe that  $(\{1, n+2\}, N_G(n+1))$ is a collapsible pair in $\N(G)$. By using this collapsible pair,  $N_G(n+1)$ collapses to 
		$\delta_1 = N_G(n+1) \setminus \{n+2\}$ and $\delta_2 = N_G(n+1) \setminus \{1\}$. Thus, $\N(G)$ collapses to a subcomplex $\Delta_1$, where  $\M(\Delta_1) = (\M(\N(G)) \setminus \{N_G(n+1)\}) \sqcup \{\delta_1, \delta_2\}$. Now $(\{1, n+3\}, \delta_1)$ is a collapsible pair in $\Delta_1$ and therefore $\delta_1$  collapses to $\delta_1 \setminus \{n+3\} $ and $\delta_1 \setminus \{1\} \subseteq \delta_2$.  Hence, $\Delta_1$ collapses to  $\Delta_2$, where $\M(\Delta_2) = (M(\N(G)) \setminus \{N_G(n+1)\}) \sqcup \{\delta_2, \delta_1 \setminus \{n+3\}\}$.  Since $\{1, 3\} \subseteq N_G(2)$, 	by applying a sequence 
		$$
		(\{1, n+4\}, \delta_1 \setminus \{n+3\} ), 	(\{1, n+5\}, \delta_1 \setminus \{n+3, n+4\}), \ldots, (\{1, n+p\}, \delta_1 \setminus \{n+3, \ldots, n+p-1\})
		$$
	of collapsible pairs, $\Delta_2$ collapses to  $\Delta_3$, where
	$\M(\Delta_3) =  (\M(\N(G)) \setminus \{N_G(n+1)\}) \sqcup \{\delta_2\}$.	
	
	Observe that for any $2 \leq i \leq p$, $(\{3, n+i\}, \delta_2)$ is a collapsible pair in $\Delta_3$. Hence by using the collapsible pair  $(\{3, n+2\}, \delta_2)$, $\delta_2 $ is collapses to $\delta_2 \setminus\{3\} = N_{H}(n+1)$ and $\delta_2 \setminus \{n+2\}$. Let $\sigma = \delta \setminus\{n+2\}$. Then,  $\Delta_3$  collapses to a subcomplex
	$\Delta_4$, where the set of maximal simplices 
	$\M(\Delta_4) =  (\M(\N(G)) \setminus \{N_G(n+1)\} )\sqcup \{N_H(n+1), \sigma\}$.	Now $(\{3, n+3\}, \sigma)$ is a collapsible pair in $\Delta_4$ and therefore $\sigma$ is collapses to 
	$\sigma \setminus \{n+3\}$ and $\sigma \setminus \{3\} \subset N_H(n+1)$.   	By applying a sequence 
	$$
	(\{3, n+4\}, \sigma \setminus \{n+3\} ), 	(\{3, n+5\}, \sigma \setminus \{n+3, n+4\}), \ldots, (\{3, n+p-1\}, \sigma \setminus \{n+3, \ldots, n+p-2\})
	$$
	of collapsible pairs,  we see that $\Delta_4$ collapses to a subcomplex $\Delta_5$, where the set of maximal simplices 
	$\M(\Delta_5) = ( \M(\N(G)) \setminus \{N_G(n+1)\} )\sqcup \{N_H(n+1)\} \sqcup \{\{3, n+p\}\}$.	
	
	Observe that $(\{n+1, 2r+6\}, N_G(1))$ and $(\{n+1, 2r+4\}, N_G(3))$ are  collapsible pairs in $\N(G)$ and hence in  $\Delta_5$. Therefore, by using these collapsible pairs we get that  $N_G(1)$ is collapses to $N_G(1) \setminus \{n+1\}$ and $N_G(1) \setminus \{2r+6\}$, and  $N_G(3)$ is collapses to $N_G(3) \setminus \{n+1\}$ and $N_G(3) \setminus \{2r+4\}$. Clearly,
	$N_G(1) \setminus \{n+1\} = N_{C_n(S)}(1)$, $N_G(3) \setminus \{n+1\} = N_{C_n(S)}(3)$ and 
	$N_G(1) \setminus \{2r+6\} = N_G(3) \setminus \{2r+4\} = \{n+1\} \cup \{2, 4, 6, \ldots, n\} \setminus \{2r+4, 2r+6\}$. Let $\tau = N_G(1) \setminus \{2r+6\}$. 
	Hence  $\Delta_5$ collapses to a subcomplex $\Delta_6$, where 
	\begin{align*}
	\M(\Delta_6)  = &  ( \M(\N(G)) \setminus \{N_G(n+1), N_G(1), N_G(3)\} ) \\
	& \sqcup \{N_H(n+1), \{3, n+p\}, N_{C_n(S)}(1), N_{C_n(S)}(3), \tau \}.
	\end{align*}
	Observe that  for any $j \in \tau, j \neq n+1$, $(\{n+1, j\}, \tau)$ is a collapsible pair in $\Delta_6$. Since $\tau \setminus \{n+1\} \subseteq N_{C_n(S)}(1)$,  	by applying a sequence 
	$$
	(\{n+1, 2\}, \tau), ( \{n+1, 4\}, \tau \setminus \{2\} ) \ldots (\{n+1, 2r+2\}, \tau\setminus \{2, 4,  \ldots, 2r\}), 
	$$
	$$
	(\{n+1, 2r+8\}, \tau\setminus \{2, 4,  \ldots, 2r, 2r+2\}), (\{n+1, 2r+10\}, \tau\setminus \{2, 4,  \ldots, 2r, 2r+2, 2r+8\}), 
	$$
	$$
	 \ldots,  (\{n+1, 4r+4\}, \sigma \setminus \{2, 4,  \ldots, 2r+2, 2r+8, \ldots, 4r+2 \})
	$$
	of collapsible pairs,  we see that $\Delta_6$ collapses to a subcomplex $\Delta_7$, where 
\begin{align*}
    \M(\Delta_7) =&  (\M(\N(G)) \setminus \{N_G(n+1), N_G(1), N_G(3)\} )\sqcup \{N_H(n+1),N_{C_n(S)}(1), N_{C_n(S)}(3)\} \\
    & \sqcup\{\{3, n+p\}, \{n+1, n\}\}.
\end{align*}
Clearly, $\M(\Delta_7) = \M(\N(C_n(S))) \sqcup \M(\N(H)) \sqcup \{\{3, n+p\} \}\sqcup \{\{n, n+1\}\}$. We  take $X= \Delta_7$.
		\end{proof}

	\bibliographystyle{alpha}

\end{document}